\documentclass{amsart}
\usepackage{graphicx} 
\usepackage[active]{srcltx}
\date{July 24, 2017}
\usepackage{verbatim,enumerate}
\title[Surfaes with light-like points]{%
  Surfaces with light-like points
  in Lorentz-Minkowski 3-space with applications
}

\author[M. Umehara]{Masaaki Umehara}
\author[K. Yamada]{Kotaro Yamada}

\address[Umehara]{%
   Department of Mathematical and Computing Sciences,
   Tokyo Institute of Technology,
   Tokyo 152-8552, Japan
}
\email{umehara@is.titech.ac.jp}

\address[Yamada]{%
   Department of Mathematics,
   Tokyo Institute of Technology,
   Tokyo 152-8551, Japan
}
\email{kotaro@math.titech.ac.jp}

\subjclass[2010]{%
 Primary 53A10;   
 Secondary 53B30, 
           35M10. 
}%
\keywords{%
    maximal surface, 
    mean curvature, 
    type change, 
    zero mean curvature, Lorentz-Minkowski space}%
\thanks{
Umehara was partially supported by 
the Grant-in-Aid for Scientific Research  (A) No.\ 26247005,  
and Yamada by (C) No.\ 26400066 from Japan Society for the 
Promotion of Science.
}
\usepackage{amsthm}
\theoremstyle{plain}
 \newtheorem{theorem}{Theorem}[section]
 \newtheorem{proposition}[theorem]{Proposition}

 \newtheorem{lemma}[theorem]{Lemma}
 \newtheorem{corollary}[theorem]{Corollary}
 
\theoremstyle{definition}
 \newtheorem{definition}[theorem]{Definition}
\theoremstyle{remark}
 \newtheorem{remark}[theorem]{Remark}
 \newtheorem*{remark*}{Remark}
 \newtheorem{example}[theorem]{Example}
 \newtheorem*{acknowledgements}{Acknowledgments}
 
\numberwithin{equation}{section}
\renewcommand{\theenumi}{{\rm(\arabic{enumi})}}
\renewcommand{\labelenumi}{\theenumi}

\newcommand{\op}[1]{{\operatorname{#1}}}
\newcommand{\mb}[1]{\vect{#1}}

\newcommand{\vect}[1]{\boldsymbol{#1}}
\newcommand{\R}{\boldsymbol{R}}

\newcommand{\CC}{\boldsymbol{C}}

\newcommand{\Y}{\mathcal{Y}}
\newcommand{\I}{\mathcal{I}}
\newcommand{\Z}{\mathcal{Z}}
\renewcommand{\phi}{\varphi}
\newcommand{\inner}[2]{\left\langle{#1},{#2}\right\rangle}

\newcommand{\II}{I\!I}
\newcommand{\III}{I\!I\!I}
\newcommand{\sech}{\operatorname{sech}}
\newcommand{\csch}{\operatorname{csch}}
\newcommand{\arccosh}{\operatorname{arccosh}}
\begin{document}
\maketitle
\begin{abstract}
 With several concrete examples of zero mean 
 curvature surfaces in $\R^3_1$ containing a 
 light-like line recently having been found, here
 we construct all real analytic germs
 of zero mean curvature surfaces by
 applying the Cauchy-Kovalevski 
 theorem for partial differential equations.
 A  point where the first fundamental form of 
 a surface degenerates is said to be \emph{light-like}.
 We also show
 a theorem on a property of light-like points
 of a surface in $\R^3_1$ 
 whose mean curvature 
 vector is smoothly extendable.
 This explains why 
 such surfaces will contain a light-like line 
 when they do not change causal types.
 Moreover, several applications of these two results
 are given. 
\end{abstract}

\section*{Introduction}
\label{sec:1}
In this paper, we denote by $\R^3_1$ the Lorentz-Minkowski $3$-space
with inner product $\inner{~}{~}$ with signature $(-++)$,
and write the canonical coordinate system of $\R^3_1$ as 
$(t,x,y)$.

Klyachin \cite{Kl} showed that
a zero mean curvature $C^3$-immersion $F\colon{}U\to\R^3_1$
of a domain $U\subset \R^2$
in the Lorentz-Minkowski $3$-space $\R^3_1$ containing a light-like
point $o$
satisfies one of the following two conditions:
\begingroup
\renewcommand{\theenumi}{(\alph{enumi})}
\renewcommand{\labelenumi}{(\alph{enumi})}
\begin{enumerate} 
 \item\label{enum:klyachin-1} {\it
     There exists a null curve $\sigma$
     {\rm(}i.e.\ a regular curve in $\R^3_1$ 
     whose velocity vector field is light-like{\rm)}
     on the image of $F$
     passing through $F(o)$
     which is non-degenerate {\rm(}i.e.\ its projection
     into the $xy$-plane
     is a locally convex plane curve, 
     cf.\ Definition~\ref{def:non-deg-null}\,{\rm)}.
     Moreover, the causal type of the surface
     changes  from time-like to space-like
     across the curve.}
\item\label{enum:klyachin-2}{\it 
     There exists a light-like line segment 
     passing through $F(o)$
     consisting of the light-like points of $F$.
     Zero mean curvature surfaces which change type 
     across a light-like line belong to this class.}
\end{enumerate} 
\endgroup
The case \ref{enum:klyachin-1} is now well understood
(cf.\ \cite{G}, \cite{Kl} and \cite{KKSY}).
In fact, under the assumption that $F$ is real analytic,
the surface in the class \ref{enum:klyachin-1}
can be reconstructed from the null curve $\sigma$
as follows:
\begin{equation}\label{eq:Bj}
  F(u,v):=
   \begin{cases}
    \dfrac{\sigma(u+i\sqrt{v})+
    \sigma(u-i\sqrt{v})}2 & (v\ge 0), \\[6pt]
    \dfrac{\sigma(u+\sqrt{|v|})+
    \sigma(u-\sqrt{|v|})}2 & (v< 0), 
   \end{cases}
\end{equation}
where $i=\sqrt{-1}$, and we extend the real analytic curve $\sigma$
as a complex analytic map into $\CC^3$.

We call the point $o$ as in the case \ref{enum:klyachin-1}
a \emph{non-degenerate light-like point} of $F$.
A typical example of 
such a surface is obtained by a null curve 
$\gamma(u)=(u, \cos u,\sin u)$ and
the resulting surface is a \emph{helicoid},
which is a zero-mean curvature 
surface  (i.e.\ ZMC-surface)
in $\R^3_1$ as well as in the Euclidean 3-space.
The reference \cite{fluid} is an expository article
of this subject. Moreover, an interesting connection between
type change of ZMC-surfaces
and $2$-dimensional fluid mechanics 
was also given in \cite{fluid}. 
The existence and properties
of entire ZMC-graphs in $\R^3_1$
with non-degenerate light-like points
are discussed in \cite{maxfold}. 
Embedded ZMC-surfaces with 
non-degenerate light-like points with many symmetries are
given in \cite{FRUYY} and \cite{JM}.
 
On the other hand,
several important ZMC-surfaces 
satisfying \ref{enum:klyachin-2} 
are  given in \cite{CR} and \cite{A}.
In contrast to the case \ref{enum:klyachin-1}, 
these examples of surfaces do not change causal 
types across the light-like line%
\footnote{
  In this paper, we say that \emph{a surface changes 
  its causal types across the light-like line}
  if the causal type of one-side of the line
  is space-like and the other-side is time-like.
  If the causal type of the both sides of the line
  coincides, we say that the surface \emph{does not change
  its causal type across the light-like line}.
}.
A family of surfaces constructed in \cite{OJM} 
satisfies \ref{enum:klyachin-2} and also changes its causal type.
In spite of this progress, there was 
still no machinery available to find surfaces
of type \ref{enum:klyachin-2} and no simple explanation for why only two
cases occur at light-like points. 

In this paper, we clarify such phenomena as follows:
We denote by  $\Y^r$ ($r\ge 3$)
the set of germs of
$C^r$-differentiable immersions in $\R^3_1$ 
whose mean curvature vector field can be
smoothly extended at a light-like point.
We prove a property
of regular surfaces in the  class $\Y^3$,
which contains the above 
Klyachin's result as a special case. Our approach
is different from that of \cite{Kl}:
we use the uniqueness of ordinary differential equations
to prove the assertion. 
We also show a general existence of real analytic
ZMC-surfaces and surfaces in $\Y^\omega$
using the Cauchy-Kovalevski theorem for 
partial differential equations.
As a consequence, new examples of 
ZMC-surfaces which 
change type along a given light-like line
are obtained.

\section{Preliminaries}
We denote by $\mb 0:=(0,0,0)$ (resp.\ $o:=(0,0)$) the origin 
of the Lorentz-Minkowski $3$-space
$\R^3_1$ of signature ($-++$)
(resp.\ the plane $\R^2$)
and denote by $(t,x,y)$ the canonical coordinate system of
$\R^3_1$.
An immersion $F:U\to\R^3_1$ of a domain $U\subset\R^2$ into 
$\R^3_1$ is said to be \emph{space-like} (resp.\ \emph{time-like},
\emph{light-like}) at $p$ if the tangent plane of the image $F(U)$
at $F(p)$ is space-like (resp.\ time-like, light-like),
that is, the restriction of the metric $\inner{~}{~}$
to the tangent plane is positive definite (resp.\ indefinite,
degenerate).
We denote by $\widetilde{\I}^r$ ($r\geq 2$)
the set of germs of 
$C^r$-immersions into $\R^3_1$ which map 
the origin $o$ in the $uv$-plane to the origin
$\vect{0}$ in $\R^3_1$.
($F\in \widetilde{\I}^\omega$ means that $F$ is real analytic.)
Let $F:(U,o)\to \R^3_1$
be an immersion in the class $\widetilde{\I}^r$.
We denote by
$U_+$ (resp.\ $U_-$)
the set of space-like (resp.\ time-like) points, 
and set
\[
   U_*:=U_+\cup U_-.
\]
A point $p\in U$ is light-like if $p\not\in U_*$.
We denote by $\widetilde{\I}^{r}_L(\subset \widetilde{\I}^r)$
the set of germs of $C^r$-immersion such that $o$ 
is a light-like point.

If $F\in\widetilde{\I}^r_L$, the tangent plane of the
image of $F$ at $o$ contains a light-like vector
and does not contain time-like vectors.
Thus, we can express the surface as a graph
\begin{equation}\label{eq:F-graph}
 F=(f(x,y),x,y),
\end{equation}
where $f(x,y)$ is a $C^r$-function 
defined on a certain neighborhood of the origin
of the $xy$-plane.
Let 
\begin{equation}\label{eq:B}
    B_F:= 1-f_x^2-f_y^2\qquad 
         \left(f_x = \frac{\partial f}{\partial x}, \quad
                              f_y = \frac{\partial f}{\partial y}\right).
\end{equation}
Then the point of the graph \eqref{eq:F-graph} is space-like (resp.\
time-like) if and only if $B_F>0$ (resp.\ $B_F<0$) at the point.
Since $F\in\widetilde\I^r_L$, 
the origin $o=(0,0)$ is light-like, that is, $B_F(0,0)=0$.
Hence there exists $\theta\in[0,2\pi)$ such that
\[ 
   f_x(0,0)=\sin\theta,\qquad f_y(0,0)=\cos\theta.
\]
So by a rotation about the $t$-axis, we may assume
\begin{equation}\label{eq:normalize}
  f_x(0,0)=0,\qquad f_y(0,0)=1
\end{equation}
without loss of generality.

We denote by $\I^r_L(\subset {\widetilde \I}^r_L)$ 
the set of  germs of $C^r$-immersion
$F$ with properties \eqref{eq:F-graph} and \eqref{eq:normalize}.
Then
\begin{equation}\label{eq:iota}
 \iota:\I^r_L\ni F \mapsto f(=\iota_F)\in 
  \{f\in C^{r}_o(\R^2)\,;\, f(0,0)=f_x(0,0)=0, f_y(0,0)=1\},
\end{equation}
which maps $F$ to the function $f$ as in \eqref{eq:F-graph},
is a bijection, where $C^{r}_o(\R^2)$ is the set of $C^r$-function
germs on a neighborhood of $o\in\R^2$.

For $F\in\I^r_L$, 
\begin{equation}\label{eq:U-pm}
  U_+=\{p\in U\,;\,B_F(p)> 0\},\qquad
  U_-=\{p\in U\,;\,B_F(p)< 0\}
\end{equation}
hold, where $B_F$ is the function as in \eqref{eq:B}. 
We let
\begin{equation}\label{eq:AB}
  A_F:=(1-f_x^2)f_{yy}+2f_xf_yf_{xy}+(1-f_y^2)f_{xx}.
\end{equation}
Then the mean curvature function of $F$ 
\begin{equation}\label{eq:H}
       H_F:=\frac{A_F}{2|B_F|^{3/2}}
\end{equation}
is defined on $U_*$ (cf.\ {\cite[Lemma 2.1]{HKKUY}}).
We first remark the following:
\begin{proposition}[{cf.\ Klyachin \cite[Example 4]{Kl}}]
 If $B_F$ vanishes identically, then so does $A_F$.
\end{proposition}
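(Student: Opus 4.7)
The plan is to exploit the constraint $B_F\equiv 0$, which says $f_x^2+f_y^2\equiv 1$, and differentiate it to obtain linear relations among the second derivatives of $f$ that force $A_F$ to vanish. This is essentially a pointwise algebraic identity, so the proof is just a short calculation rather than anything requiring real analysis.

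Concretely, I would first rewrite $1-f_x^2=f_y^2$ and $1-f_y^2=f_x^2$ and substitute into \eqref{eq:AB} to get the simplified expression
\begin{equation*}
  A_F = f_y^2 f_{yy} + 2 f_x f_y f_{xy} + f_x^2 f_{xx}.
\end{equation*}
Then I would differentiate the identity $f_x^2+f_y^2=1$ with respect to $x$ and $y$ respectively to obtain
\begin{equation*}
  f_x f_{xx} + f_y f_{xy} = 0, \qquad f_x f_{xy} + f_y f_{yy} = 0.
\end{equation*}
Multiplying the first equation by $f_x$ yields $f_x^2 f_{xx} = -f_x f_y f_{xy}$, and multiplying the second by $f_y$ yields $f_y^2 f_{yy} = -f_x f_y f_{xy}$.

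Substituting these into the simplified form of $A_F$ gives
\begin{equation*}
  A_F = -f_x f_y f_{xy} + 2 f_x f_y f_{xy} - f_x f_y f_{xy} = 0,
\end{equation*}
which is the desired conclusion. The only subtlety is the hypothesis $r\ge 3$ implicit in the setup, which ensures that $f$ is at least $C^2$ so that the second partial derivatives used above are well defined and the identity $B_F\equiv 0$ may legitimately be differentiated; for $F\in\widetilde{\I}^r_L$ with $r\ge 2$ this is immediate, so there is no genuine obstacle.
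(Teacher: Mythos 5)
Your proof is correct and follows essentially the same route as the paper: both differentiate the identity $f_x^2+f_y^2=1$ to express $f_y^2f_{yy}$ and $f_x^2f_{xx}$ as $-f_xf_yf_{xy}$ and substitute into $A_F$. No issues.
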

\begin{proof}
 Since $B_F=0$, we have
 $1-f_x^2=f_y^2$.
 By differentiating this, we get
 $f_x f_{xy}=-f_y f_{yy}$,
 and
 \begin{equation}\label{eq:1}
  (1-f_x^2)f_{yy}=f_y(f_yf_{yy})=-f_x f_yf_{xy}.
 \end{equation}
 Similarly, we have
 \begin{equation}\label{eq:2}
  (1-f_y^2)f_{xx}=f_x^2f_{xx}=-f_x f_yf_{xy}.
 \end{equation}
 By \eqref{eq:1} and \eqref{eq:2}, we get the
 identity $A_F=0$.
\end{proof}

We denote by $\Lambda^r$ the set of germs 
of immersions $F\in \I^r_L$ with identically vanishing $B_F$,
that is, $\Lambda^r$ is the set of germs of 
\emph{light-like immersions.}
We denote by $C^\omega_{o}(\R,0_2)$ the set of 
real analytic functions $\phi$ satisfying 
$\phi(0)=d\phi(0)/dx=0$.
Then the following assertion holds:

\begin{proposition}\label{prop:CK0}
 The map
 \[
   \lambda:\Lambda^\omega\ni F \mapsto (\lambda_{F}:=)f(x,0)
          \in C^\omega_{o}(\R,0_2)
 \]
 is bijective, where $f=\iota_F$ {\rm(}cf.\ \eqref{eq:iota}{\rm)}.
\end{proposition}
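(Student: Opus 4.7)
The plan is to recognize the defining condition $B_F\equiv 0$ as the eikonal equation $f_x^2+f_y^2=1$ and to exploit the normalization \eqref{eq:normalize} to rewrite it, near $o$, as a first-order real analytic PDE in Cauchy-Kovalevski form with $y$ as the evolution variable. Since $f_y(0,0)=1>0$ and $f$ is at least $C^1$, the function $f_y$ remains positive on some neighborhood of $o$, so we may solve $f_y^2=1-f_x^2$ for the positive branch and obtain the equivalent scalar equation
\begin{equation}\label{eq:eikonal-evol}
  f_y=\sqrt{1-f_x^2}.
\end{equation}
The right-hand side is real analytic in $f_x$ on $|f_x|<1$, and the initial condition $f_x(0,0)=0$ guarantees $|f_x|<1$ in some neighborhood of $o$.

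Next, I would verify that the map $\lambda$ is well-defined: if $F\in\Lambda^\omega$, then $\phi(x):=f(x,0)$ is real analytic near $0$, and from \eqref{eq:normalize} we read off $\phi(0)=f(0,0)=0$ and $\phi'(0)=f_x(0,0)=0$, hence $\phi\in C^\omega_{o}(\R,0_2)$.

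For surjectivity, given any $\phi\in C^\omega_{o}(\R,0_2)$, the condition $\phi'(0)=0$ ensures $|\phi'(x)|<1$ near $x=0$, so the Cauchy problem consisting of \eqref{eq:eikonal-evol} together with the initial datum $f(x,0)=\phi(x)$ on the non-characteristic hypersurface $\{y=0\}$ has real analytic coefficients. The Cauchy-Kovalevski theorem then produces a real analytic function $f$ on a neighborhood of $o$ solving \eqref{eq:eikonal-evol}. Squaring gives $f_x^2+f_y^2=1$, that is, $B_F\equiv 0$, and checking values at $o$ shows $f(0,0)=0$, $f_x(0,0)=\phi'(0)=0$, $f_y(0,0)=\sqrt{1-0}=1$. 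Thus $F:=(f,x,y)$ lies in $\Lambda^\omega$ and satisfies $\lambda_F=\phi$.

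Injectivity follows from the uniqueness clause of Cauchy-Kovalevski: if $F_1,F_2\in\Lambda^\omega$ share the same trace $\phi=f_1(x,0)=f_2(x,0)$, then both $f_1$ and $f_2$ solve the same analytic Cauchy problem \eqref{eq:eikonal-evol} with initial datum $\phi$ (the positive branch being forced by $\partial_y f_i(0,0)=1$), so they agree as germs at $o$, whence $F_1=F_2$. The only real subtlety is the branch choice for the square root, which is handled by continuity of $f_y$ together with the normalization; beyond that, the argument is a direct application of Cauchy-Kovalevski to the eikonal equation rewritten in evolution form.
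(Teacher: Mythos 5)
Your proposal is correct and follows essentially the same route as the paper: rewrite $B_F\equiv 0$ as $f_y=\sqrt{1-f_x^2}$ (the positive branch being forced by the normalization \eqref{eq:normalize}), and apply the Cauchy--Kovalevski theorem with initial datum $f(x,0)=\psi(x)$, using $\psi'(0)=0$ to keep $|f_x|<1$ so the right-hand side stays analytic. Your write-up merely spells out the well-definedness, surjectivity, injectivity, and branch-choice points that the paper leaves implicit.
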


\begin{proof}
 Since $B_F$ vanishes identically, taking in account of
 \eqref{eq:normalize},
 we can write 
 \[
   f_{y}=\sqrt{1-f_x^2}.
 \]
 This can be considered as a normal form of 
 a partial differential equation
 under the initial condition
 \begin{equation}\label{eq:psi}
  f(x,0):=\psi(x)\qquad (\psi\in C^\omega_{o}(\R,0_2)),
 \end{equation}
 because of the condition $f_x(0,0)=\psi'(0)=0$.
 So we can apply the Cauchy-Kovalevski theorem (cf.~\cite{KP})
 and show the uniqueness and  existence of the solution
 $f$ satisfying \eqref{eq:psi}.
\end{proof}

\begin{remark}
 The above proof of the existence of a light-like surface
 $F(x,y)$ satisfying \eqref{eq:psi} is local, that is,
 it is defined only for small $|y|$.
 Later, we will show that $F$ is a ruled surface
 and has an explicit expression, 
 see 
 Corollary~\ref{cor:null-ruled} and \eqref{eq:LF}.
\end{remark}

\begin{example}\label{ex:A}
 The light-like plane
 $F(x,y)=(y,x,y)$
 belongs to the class $\Lambda^\omega$
 such that $\lambda_F=0$.
\end{example}

\begin{example}\label{ex:B}
 The light-cone
 $F(x,y)=(\sqrt{x^2+(1+y)^2}-1,x,y)$
 is a light-like surface satisfying
 $\lambda_F=\sqrt{1+x^2}-1$.
\end{example}

\section{Surfaces with smooth mean curvature vector field.}
Let $F:(U,o)\to (\R^3_1,\vect{0})$ be an immersion
of class $\I^r_L$ ($r\ge 3$) such that $U_*$ is 
open and dense in $U$,
and fix a $C^{r-2}$-function  $\phi$.
We say that  $F$ is \emph{$\phi$-admissible} if
\begin{equation}\label{eq:H2}
   A_F-\phi B_F^{2}=0
\end{equation} 
holds,
where $A_F$ and $B_F$ are as in \eqref{eq:AB} and \eqref{eq:B},
respectively.
We denote
\begin{equation}\label{eq:Y}
 \Y^r_\varphi :=\{F\in\I^r_L\,;\,\text{$F$ is $\varphi$-admissible}\}.
\end{equation}
An immersion germ $F\in \I^{r}_L$ is 
called \emph{admissible} if
it is $\phi$-admissible 
for a certain $\phi\in C^{r-2}_o(\R^2)$.
The set
\[
  \Y^r:=\bigcup_{\phi\in C^{r-2}_o(\R^2)}
  \Y^r_{\phi}
\]
consists of all germs of $\phi$-admissible immersions.
The following assertion explains why the class $\Y^r$
is important.

\begin{proposition}[\cite{HKKUY}]
 Let $F:(U,o)\to (\R^3_1,\mb 0)$ 
 be an immersion in the class $\mathcal I^r_L$
 for $r\geq 3$.
 Then the mean curvature vector field $\vect{H}_F$
 can be $C^{r-2}$-differentiablly
 extended on a neighborhood of $o$ 
 if and only if $F$ belongs to the class $\Y^r$. 
\end{proposition}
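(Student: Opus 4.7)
The plan is to derive the closed-form expression
\begin{equation*}
  \vect H_F = \frac{A_F}{2\,B_F^{2}}\,N
\end{equation*}
for the mean curvature vector on $U_*$, where $N:=(1,f_x,f_y)$ is a $C^{r-1}$
nowhere-vanishing vector normal to $F$, and then to read off the claimed
equivalence from the first coordinate of this identity.

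The first step is to collect three elementary facts about $N$ that are valid on all of $U$:
$\inner{N}{F_x}=\inner{N}{F_y}=0$, so $N$ spans the normal line wherever
$F$ is non-degenerate; $\inner{N}{N}=-B_F$; and $N$ is of class $C^{r-1}$
with first coordinate identically $1$, so $N$ vanishes nowhere. Then,
on $U_*$ (where the induced metric and normal bundle are non-degenerate),
I would apply the intrinsic formula
\begin{equation*}
 \vect H_F
    = \tfrac12\, g^{ij}\,(F_{ij})^\perp
    = \tfrac12\, g^{ij}\,
         \frac{\inner{F_{ij}}{N}}{\inner{N}{N}}\,N,
\end{equation*}
and substitute $\det(g_{ij})=B_F$ together with
$\inner{F_{ij}}{N}=-\partial_i\partial_j f$. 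A short contraction with $g^{ij}$
(using the explicit adjugate of the $2\times 2$ induced metric) produces
exactly $A_F$ from \eqref{eq:AB} in the numerator and $B_F^{2}$ in the
denominator, giving the displayed formula.

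With that formula in hand, the equivalence follows by reading off the
first coordinate. If $F\in\Y^r$, so that $A_F=\phi B_F^{2}$ for some
$\phi\in C^{r-2}_o(\R^2)$, the formula collapses on $U_*$ to
$\vect H_F=(\phi/2)\,N$, which is already a $C^{r-2}$ vector field on a
neighborhood of~$o$. Conversely, if $\vect H_F$ admits a $C^{r-2}$ extension
to a neighborhood of $o$, its first coordinate is a $C^{r-2}$ function
agreeing with $A_F/(2B_F^{2})$ on $U_*$; taking $\phi$ to be twice that
first coordinate yields $A_F=\phi B_F^{2}$ on $U_*$.

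The main obstacle I anticipate is propagating the identity
$A_F=\phi B_F^{2}$ from $U_*$ to the whole neighborhood of~$o$.
On $\overline{U_*}$ it is automatic by continuity of both sides.
On any open component of $U\setminus\overline{U_*}$, the function $B_F$
vanishes identically, and the preceding proposition forces $A_F\equiv 0$
there as well, so the identity holds trivially and $F$ lies in the class
$\Y^r$ as defined. That structural input is the only place where the
argument uses something beyond the direct graph computation; everything
else is bookkeeping around the explicit formula for $\vect H_F$.
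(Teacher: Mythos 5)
Your proof is correct and takes essentially the same route as the paper's: the paper's entire argument is the observation that $\vect{H}_F=\frac{A_F}{2B_F^2}\,(F_x\times F_y)$ on $U_*$, and your $N=(1,f_x,f_y)$ is exactly $\pm F_x\times F_y$, so reading off the first coordinate gives the equivalence in both cases. You merely supply the derivation of that formula and the use of the preceding proposition ($B_F\equiv 0$ forces $A_F\equiv 0$) to propagate $A_F=\phi B_F^2$ off $U_*$, details the paper leaves implicit.
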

\begin{proof}
 This assertion follows from the fact that 
 \[
   \vect{H}_F=\frac{A_F}{2(B_F)^2} (F_{x}\times F_{y})
 \] 
 on $U^*$,
 where $\times$ denotes the vector product in $\R^3_1$.
\end{proof}

Since $\phi=0$ implies $\vect{H}_F=0$,  
\begin{equation}\label{eq:Z}
   \Z^r:=\{F\in \I^{r}_L\,;\, A_F=0\}(=\Y^r_0)
\end{equation}
is the set of germs of zero mean curvature immersions
in $\R^3_1$ at the light-like point $o$.
By definition, we have 
\begin{equation}
 \Lambda^r\subset \Z^r \subset \Y^r
  \qquad (r\ge 3).
\end{equation}

Surfaces in the class $\Y^r$ are investigated in \cite{HKKUY}, 
and an entire graph in $\Y^r$ which is not a ZMC-surface was
given. In this section, we shall show a
general existence result of surfaces in the class
$\Y^\omega$. We fix a germ of a real analytic function  
$\phi\in C^\omega_{o}(\R^2)$, and
take an immersion $F\in \Y^\omega_\phi$. 
\begin{definition}
 Let $f:=\iota_F$ be the function associated with
 $F\in \Y^\omega_\phi$ (cf.\ \eqref{eq:iota}).
 Then
 \begin{equation}\label{eq:f0}
  \gamma_F(x):=(f(x,0),f_y(x,0))
 \end{equation}
 is a real analytic plane curve, which we call 
 the \emph{initial curve} 
 associated with $F$.
\end{definition}

We denote by $C_{(0,1)}^\omega(\R,\R^2)$
the set of germs of $C^\omega$-maps $\gamma:(\R,0)\to (\R^2,(0,1))$. 
By definition (cf.\ \eqref{eq:normalize}),
$\gamma_F(x)\in C_{(0,1)}^\omega(\R,\R^2)$
holds. We prove the following assertion:
\begin{theorem}\label{thm:E0}
 For $\phi\in C^\omega(\R^2)$,
 the set $\Y^\omega_\phi$ is non-empty.
 More precisely, the map
 \[
   \Y^\omega_\phi\ni F \mapsto \gamma_F\in C_{(0,1)}^\omega(\R,\R^2)
 \]
 is bijective.
 Moreover, the base point $o$ is a 
 non-degenerate $($resp.~degenerate$)$ light-like point 
 of $F$ if $\dot\gamma_F(0)\ne (0,0)$
 {\rm(}resp.~$\dot\gamma_F(0)=(0,0)${\rm)},
 where ``dot'' denotes $d/dx$.
\end{theorem}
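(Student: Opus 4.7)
The plan is to view the admissibility equation \eqref{eq:H2} as a second-order quasi-linear PDE in $f=\iota_F$ and to solve the associated Cauchy problem with initial data $f(x,0)=\psi_1(x)$ and $f_y(x,0)=\psi_2(x)$, where $\gamma_F=(\psi_1,\psi_2)$ is the prescribed initial curve, via the Cauchy--Kovalevski theorem, exactly in the spirit of Proposition~\ref{prop:CK0}.

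First I would put \eqref{eq:H2} in Kovalevski normal form with respect to $y$. Using \eqref{eq:AB} and \eqref{eq:B}, the equation reads
\begin{equation*}
 (1-f_x^2)\,f_{yy}
   = \phi\,(1-f_x^2-f_y^2)^2
     - 2 f_x f_y f_{xy} - (1-f_y^2) f_{xx}.
\end{equation*}
At $o$ one has $f_x(0,0)=\psi_1'(0)=0$ by the normalization built into $\I^\omega_L$, so the coefficient $1-f_x^2$ equals $1$ at $o$ and the right-hand side is real-analytic in $(x,y,f,f_x,f_y,f_{xy},f_{xx})$ near the relevant jet. Cauchy--Kovalevski then produces a unique real-analytic germ $f$ realising the prescribed Cauchy data.

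Setting $F:=(f,x,y)$, I would verify that $F\in\Y^\omega_\phi$: by construction $f(0,0)=\psi_1(0)=0$, $f_y(0,0)=\psi_2(0)=1$ and $f_x(0,0)=0$, so $F\in\I^\omega_L$ with $B_F(0,0)=0$, and the PDE itself gives $A_F-\phi B_F^2=0$. The map $F\mapsto\gamma_F$ is therefore surjective via the inverse just constructed, while injectivity follows from the uniqueness clause of Cauchy--Kovalevski: two such $F$ with equal $\gamma_F$ share the same Cauchy data and hence coincide.

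For the assertion on (non-)degeneracy I would unwind Definition~\ref{def:non-deg-null}. At $o$ the tangent plane is spanned by $F_x=(0,1,0)$ and the light-like vector $F_y=(1,0,1)$, so the unique null tangent direction on $F$ at $o$ projects to $(0,1)$ in the $xy$-plane. The null curve $\sigma$ on $F$ through $F(o)$ is obtained by integrating the null direction field on the surface; its $xy$-projection has a well-defined curvature at $o$ which, after computing the null direction field to second order via the implicit-function theorem applied to $\inner{aF_x+bF_y}{aF_x+bF_y}=0$, depends linearly and non-trivially on $f_{xy}(0,0)=\psi_2'(0)$. Since $\dot\gamma_F(0)=(\psi_1'(0),\psi_2'(0))=(0,\psi_2'(0))$, non-degeneracy of $o$ becomes $\dot\gamma_F(0)\ne(0,0)$. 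The main obstacle I foresee is this last curvature computation: the structure is clear, but tracking the null direction field on $F$ to sufficient order, and identifying which coefficient of $\gamma_F$ controls it, requires some care.
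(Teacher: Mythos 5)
Your treatment of the main (bijectivity) assertion is essentially the paper's argument: the paper likewise casts $A_F-\phi B_F^2=0$ as a Cauchy problem in $y$ (writing it as the first-order system $f_y=g$, $g_y=\cdots$ rather than keeping it second order, which is immaterial) and invokes the Cauchy--Kovalevski theorem for existence and uniqueness with data $(f(x,0),f_y(x,0))=\gamma(x)$; the nonvanishing of $1-f_x^2$ near $o$ is the same normal-form check you make. That part is fine.

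The gap is in the final assertion about (non-)degeneracy of $o$. You propose to verify Klyachin's case (a) directly by constructing the null curve through $F(o)$ and computing the curvature of its $xy$-projection, and you explicitly leave that computation undone. Besides being incomplete, the plan has a conceptual wobble: there is no null tangent direction field on an open neighbourhood of $o$ to integrate (space-like points have no null tangent direction, time-like points have two); the relevant null curve is $F$ restricted to the degenerate locus $\{B_F=0\}$, and its existence and non-degeneracy when $\nabla B_F(o)\ne\vect{0}$ is precisely the content of Theorem~\ref{thm:main2}\,(a), quoted from \cite{HKKUY} --- it is not something to re-derive inside this proof. The criterion for non-degeneracy actually operative here is simply $\nabla B_F(o)\ne\vect{0}$, and the intended argument is a two-line computation, not a curvature analysis: since $f_x(o)=0$ and $f_y(o)=1$, one has $\dot\gamma_F(0)=(f_x(0,0),f_{xy}(0,0))=(0,f_{xy}(o))$ and $\nabla B_F(o)=(-2f_{xy}(o),-2f_{yy}(o))$, while evaluating $A_F=\phi B_F^2$ at $o$ gives $f_{yy}(o)=A_F(o)=0$; hence $\nabla B_F(o)=\vect{0}$ if and only if $\dot\gamma_F(0)=(0,0)$. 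Replace your curvature plan with this observation (or, if you want the null-curve characterization, invoke Theorem~\ref{thm:main2} to pass between the two formulations).
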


\begin{proof}
 Suppose that $F\in \Y^\omega_\phi$. 
 Since 
 $A_F-\varphi B_F^2$ vanishes identically (cf.\ \eqref{eq:H2}),
 $f=\iota_F$ satisfies
 \begin{equation}\label{eq:pde}
  f_y=g,\quad
  g_{y}=-\frac{2 f_x g g_{x}+(1- g^2)f_{xx}-(1-f_x^2-g^2)^{2}\phi}{1-f_x^2},
 \end{equation}
 which is the normal form for partial differential equations. 
 So we can apply the Cauchy-Kovalevski theorem (cf.\ \cite{KP})
 for a given initial data
 \[
     (f(x,0),g(x,0)):=
    \gamma(x)\qquad (\gamma\in C_{(0,1)}^\omega(\R,\R^2)).
 \]
 Then the solution $(f,g)$ of \eqref{eq:pde} is uniquely 
 determined. Obviously, the resulting immersion 
 $F_\gamma:=(f(x,y),x,y)$ 
 gives a surface in $\Y^\omega_\phi$
 whose initial curve is $\gamma$.
 The second assertion follows from the fact that
 $\dot{\gamma}(0)=(0,0)$ if and only if $\nabla B_F(0,0)=\vect{0}$,
 where $\nabla B_F:=((B_F)_x,(B_F)_y)$.
\end{proof}

When $\phi=0$, we get the following:

\begin{corollary}\label{cor:Z00}
 The map
 $\Z^\omega\ni F \mapsto \gamma_F\in C^\omega_{(0,1)}(\R,\R^2)$
 is bijective. 
\end{corollary}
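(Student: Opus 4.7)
The plan is to obtain this corollary as a direct specialization of Theorem \ref{thm:E0}. First I would observe that, by the definition \eqref{eq:Z}, the set $\Z^r$ coincides with $\Y^r_0$: the vanishing condition $A_F = 0$ is precisely the $\phi$-admissibility relation $A_F - \phi B_F^{2} = 0$ in the case $\phi \equiv 0$. In particular $\Z^\omega = \Y^\omega_0$, and this identification carries the map $F \mapsto \gamma_F$ to the corresponding map on $\Y^\omega_0$.

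Next, since the zero function $\phi \equiv 0$ lies in $C^\omega(\R^2)$, Theorem \ref{thm:E0} applies with this choice of $\phi$ and asserts that
\[
   \Y^\omega_{0} \ni F \longmapsto \gamma_F \in C^\omega_{(0,1)}(\R,\R^2)
\]
is bijective. Combining this with the identification $\Z^\omega = \Y^\omega_0$ yields the stated bijection.

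There is no real obstacle here beyond unwinding definitions; the substantive analytic content (the Cauchy--Kovalevski argument applied to the normal form \eqref{eq:pde}) is already built into the proof of Theorem \ref{thm:E0}. If one wished to present the argument in a self-contained fashion, the PDE \eqref{eq:pde} specialises, when $\phi = 0$, to
\[
  f_y = g, \qquad g_y = -\frac{2 f_x g\, g_x + (1 - g^2) f_{xx}}{1 - f_x^2},
\]
which is again in normal form with real analytic right-hand side on the locus $1 - f_x^2 \ne 0$; the Cauchy--Kovalevski theorem then produces, for every initial datum $\gamma \in C^\omega_{(0,1)}(\R,\R^2)$, a unique real analytic solution $(f,g)$ near $o$, and the associated graph $F_\gamma = (f(x,y), x, y)$ is the required element of $\Z^\omega$ with $\gamma_{F_\gamma} = \gamma$.
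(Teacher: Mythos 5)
Your proposal is correct and is exactly the paper's intended argument: the corollary is stated immediately after Theorem~\ref{thm:E0} with the remark ``When $\phi=0$, we get the following,'' i.e.\ it is precisely the specialization $\Z^\omega=\Y^\omega_0$ of the bijection in that theorem. Your additional unwinding of the PDE \eqref{eq:pde} at $\phi=0$ is a harmless elaboration of the same Cauchy--Kovalevski argument.
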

The following is a direct consequence of
this corollary and Theorem \ref{thm:E0}.
\begin{corollary}
 In the above correspondence, it holds that
 \[
   \Lambda^\omega=\left\{F\in \Z^\omega\,;\,  
    \gamma_F=\left(\psi,\sqrt{1-\dot \psi^2}\right),
     \,\, \psi\in C^\omega_{o}(\R,0_2)
    \right\}.
 \]
\end{corollary}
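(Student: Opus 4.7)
The plan is to prove both inclusions directly, using as the main engines Proposition~\ref{prop:CK0} (which parameterizes $\Lambda^\omega$ by the single function $\lambda_F = f(x,0)$) and Corollary~\ref{cor:Z00} (which says $F \in \Z^\omega$ is determined uniquely by its initial curve $\gamma_F$).

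For the forward inclusion, I would take $F \in \Lambda^\omega$ and let $f = \iota_F$. By definition of $\Lambda^\omega$, $B_F = 1 - f_x^2 - f_y^2$ vanishes identically, so in particular along $y=0$ one has $f_y(x,0)^2 = 1 - f_x(x,0)^2$. Writing $\psi(x) := f(x,0) = \lambda_F \in C^\omega_o(\R,0_2)$, the normalization $f_y(0,0)=1$ forces the positive square root on a neighborhood of $0$, so $f_y(x,0) = \sqrt{1-\dot\psi(x)^2}$. Hence $\gamma_F = (\psi, \sqrt{1-\dot\psi^2})$ with $\psi \in C^\omega_o(\R,0_2)$, which is the desired form.

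For the reverse inclusion, suppose $F \in \Z^\omega$ and $\gamma_F = (\psi, \sqrt{1-\dot\psi^2})$ for some $\psi \in C^\omega_o(\R,0_2)$. By Proposition~\ref{prop:CK0}, there exists a light-like immersion $\widetilde F \in \Lambda^\omega$ with $\lambda_{\widetilde F} = \psi$. Since $\Lambda^\omega \subset \Z^\omega$, the forward direction just proved gives $\gamma_{\widetilde F} = (\psi, \sqrt{1-\dot\psi^2}) = \gamma_F$. By the bijectivity in Corollary~\ref{cor:Z00}, this forces $F = \widetilde F$, and therefore $F \in \Lambda^\omega$.

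The only point that requires any care is the choice of sign in the square root in the forward direction, but the normalization $f_y(0,0)=1$ imposed on the class $\I^\omega_L$ resolves this unambiguously on a neighborhood of the origin. Everything else is a direct assembly of the results already proved.
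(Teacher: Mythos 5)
Your proof is correct and follows essentially the route the paper intends: the paper gives no explicit argument, merely asserting that the corollary is a direct consequence of Proposition~\ref{prop:CK0}, Theorem~\ref{thm:E0} and Corollary~\ref{cor:Z00}, and your two inclusions spell out exactly that assembly (with the sign of the square root correctly pinned down by the normalization $f_y(0,0)=1$). Nothing further is needed.
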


\section{A property of light-like points.}
\label{sec:3}
\begin{definition}\label{def:non-deg-null}
 Let $I$ be an open interval, and
 $\sigma:I\to \R^3_1$ a  regular curve of class $C^r$ ($r\ge 3$). 
 The space curve $\sigma$ is called \emph{null}
 if $\sigma'(t)=d\sigma/dt$ is light-like.
 Moreover $\sigma$ is called \emph{non-degenerate}
 if $\sigma''(t)$ is not proportional to $\sigma'(t)$ 
 for each $t\in I$.
\end{definition}

The orthogonal projection of a non-degenerate
null curve into the $xy$-plane is a locally
convex plane curve.
The following assertion is a generalization of
Klyachin's result in the introduction,
since ZMC-surfaces are elements of $\Y^3$.

\begin{theorem}
\label{thm:main2}
 Let $F:(U,o)\to \R^3_1$ be an immersion of class $\Y^3$.
 Then, one of the following two cases occurs{\rm:}
 \begingroup
 \renewcommand{\theenumi}{{\rm (\alph{enumi})}}
 \renewcommand{\labelenumi}{\rm (\alph{enumi})}
 \begin{enumerate}
  \item\label{enum:K-1}
       $\nabla B_F$ does not vanish at $o$, and
       the image of the level set $F(\{B_F=0\})$ consists of
       a non-degenerate null regular curve in $\R^3_1$,
       where $f=\iota_F$.
  \item\label{enum:K-2}
       $\nabla B_F$ vanishes at $o$, and
       the image of the level set $F(\{B_F=0\})$ contains
       a light-like line segment in $\R^3_1$ passing through $F(o)$.
 \end{enumerate}
 \endgroup
\end{theorem}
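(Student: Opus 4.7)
The plan is to introduce the null characteristic field $X := f_x\,\partial_x + f_y\,\partial_y$ on $U$. At any light-like point one has $f_x^2+f_y^2=1$, so $F_*X = (1,f_x,f_y)$ is the unique null vector in the tangent plane, i.e.\ $X$ is the degenerate direction of $A_F = \phi B_F^2$. Since $X(o)=(0,1)\ne\vect{0}$, the flow of $X$ through $o$ gives a regular integral curve $(x(t),y(t))$ with initial tangent $(0,1)$; set $\sigma(t):=F(x(t),y(t))$.

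A direct computation gives
\[
  XB_F \;=\; -2A_F + 2(f_{xx}+f_{yy})B_F,
\]
so admissibility $A_F=\phi B_F^2$ rewrites this as $XB_F = B_F\cdot\psi$ for a continuous function $\psi$. Restricted to $(x(t),y(t))$ this is a linear scalar ODE in $b(t):=B_F(x(t),y(t))$ with $b(0)=0$, and uniqueness forces $b\equiv 0$. Hence $\sigma'(t) = (1,f_x,f_y)|_{(x(t),y(t))}$ is null, and a second differentiation together with
\[
  \nabla B_F \;=\; -2\bigl(f_xf_{xx}+f_yf_{xy},\ f_xf_{xy}+f_yf_{yy}\bigr)
\]
yields the fundamental identity
\[
  \sigma''(t) \;=\; -\tfrac12\bigl(0,\,\nabla B_F(x(t),y(t))\bigr).
\]

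Case~\ref{enum:K-1} then follows almost immediately. The implicit function theorem makes $\{B_F=0\}$ a regular curve near $o$, and the identity $(f_x,f_y)\cdot\nabla B_F = XB_F = -2A_F = 0$ on $\{B_F=0\}$ forces $(0,1)$ to be tangent to this curve, so $(x(t),y(t))$ locally parametrizes it. Non-degeneracy of $\sigma$ at $o$ is automatic: $\sigma''(0)=-\tfrac12(0,\nabla B_F(o))\ne\vect{0}$ has zero first component while $\sigma'(0)=(1,0,1)$ has first component $1$, ruling out proportionality; continuity extends this to a neighborhood.

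The main obstacle is case~\ref{enum:K-2}, $\nabla B_F(o)=\vect{0}$. By the formula for $\sigma''$ it suffices to show that $n(t):=\nabla B_F(x(t),y(t))$ vanishes identically, after which $\sigma$ is a light-like line segment through $F(o)$ lying in $F(\{B_F=0\})$. I plan to derive a linear ODE $n'(t)=M(t)\,n(t)$ along the integral curve. From $w\cdot n = XB_F = -2A_F = 0$ on the curve, where $w:=(f_x,f_y)$ with $|w|=1$, the vector $n$ is always perpendicular to $w$, so $n=\beta\,w^\perp$ with $\beta(0)=0$. Differentiating the admissibility relation twice yields
\[
  \mathrm{Hess}(A_F) \;=\; 2\phi\,\nabla B_F\otimes\nabla B_F \quad\text{on}\ \{B_F=0\},
\]
and pairing this identity with the explicit expression of $\mathrm{Hess}(A_F)$ in the third-order jet of $f$ should collapse the evolution of $\beta$ to a scalar linear ODE $\beta'(t)=c(t)\,\beta(t)$; ODE uniqueness then delivers $\beta\equiv 0$. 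The delicate step is carrying out this reduction without dividing by quantities that vanish on the level set, i.e.\ packaging the higher-order jets of $f$ cleanly along $(x(t),y(t))$.
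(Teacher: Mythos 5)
Your strategy is genuinely different from the paper's: the paper expands $f(x,y)=a_0(y)+a_1(y)x+\tfrac12h(x,y)x^2$ via a division lemma and turns $\tilde A|_{x=0}=\tilde A_x|_{x=0}=0$ into a second-order Lipschitz ODE system for $(a_0,a_1)$, which the explicit solution $(y,0)$ must equal by uniqueness; you instead flow along the null characteristic field $X=f_x\partial_x+f_y\partial_y$ and use first-order linear ODE uniqueness. Your identity $XB_F=-2A_F+2(f_{xx}+f_{yy})B_F$ is correct, so $b'=\psi b$ gives $b\equiv0$, the formula $\sigma''=-\tfrac12(0,\nabla B_F)$ checks out, and case \ref{enum:K-1} is complete (the paper simply cites \cite{HKKUY} for that part).

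The gap is in case \ref{enum:K-2}, which carries the entire weight of the theorem: the linear ODE $n'(t)=M(t)n(t)$ for $n=\nabla B_F$ along the flow is only asserted, with the derivation flagged as ``the delicate step.'' Moreover, the tool you propose --- differentiating $A_F=\phi B_F^2$ twice to get $\operatorname{Hess}(A_F)=2\phi\,\nabla B_F\otimes\nabla B_F$ on $\{B_F=0\}$ --- is not available in the class $\Y^3$: there $f$ is only $C^3$, so $A_F$ is only $C^1$ and has no Hessian; it is also not the identity actually needed. The step can be completed with one derivative only. Differentiating $A_F=\phi B_F^2$ once gives $\nabla A_F=\vect 0$ at every point where $B_F=0$. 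Writing $H$ for the Hessian of $f$ and $w=(f_x,f_y)$, one has $n=-2Hw$, $w'=Hw=-\tfrac12n$ along the flow, and, by symmetry of third partials, $((XH)w)_j=\partial_j(w^THw)-2(H^2w)_j$ together with $w^THw=A_F-(f_{xx}+f_{yy})B_F$; evaluating on the flow (where $B_F=0$ and $\nabla A_F=\vect 0$) yields
\[
  n'=-2(XH)w-2Hw'=\bigl(-H+2(f_{xx}+f_{yy})I\bigr)n,
\]
a linear ODE with continuous coefficients, whence $n\equiv\vect 0$ from $n(0)=\vect 0$, and $\sigma$ is a light-like line segment inside $F(\{B_F=0\})$. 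Until this computation (or an equivalent one) is actually carried out, your proof of case \ref{enum:K-2} is incomplete; once it is, your argument is a valid and arguably more geometric alternative to the paper's, treating both cases by the same characteristic-flow mechanism.
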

\begin{proof}
 The first assertion \ref{enum:K-1} was proved in
 \cite[Proposition 3.5]{HKKUY}.
 So it is sufficient to prove \ref{enum:K-2}.
 We may assume that $F\in \Y^3_{\phi}$ and
 $\phi\in C^1_o(\R^2)$.
 Let $f:=\iota_F$.
 Since $f$ is of class $C^3$, 
 applying the division lemma (Lemma~\ref{lem:div} in
 Appendix~\ref{app:div}) for $g(x,y):=2(f(x,y)-f(0,y)-xf_x(0,y))$,
 there exists a $C^1$-function $h$
 such that
 \begin{equation}\label{eq:f-rep}
  f(x,y)=a_0(y)+a_1(y)x+\frac{h(x,y)}2x^2
   \quad \bigl(a_0(y):=f(0,y),~a_1(y):=f_x(0,y)\bigr).
 \end{equation}
 By \eqref{eq:normalize},
 it holds that 
 \begin{equation}\label{I1}
   a_0(0)=0,\quad a'_0(0)=1,\qquad
   a_1(0)=0.
 \end{equation}
 Moreover, since $\nabla B_F$ vanishes at $o$, we have
 \begin{equation}\label{I2}
  a'_1(0)=0.
 \end{equation}
 We set (cf.\ \eqref{eq:AB} and
 \eqref{eq:H2}) 
 \[
   \tilde A:=A_F-\phi B_F^{2}.
 \]
 Since $F\in \Y^3_{\varphi}$, 
 $\tilde A$ vanishes identically.
 We set
 \begin{alignat*}{3}
  h_0(y)&:=h(0,y),
  \quad &
  h_1(y)&:=h_x(0,y),
  \quad &
  h_2(y)&:=h_y(0,y), \\
  \phi_0(y)&:=\phi(0,y),
  \quad &
  \phi_1(y)&:=\phi_x(0,y),
  \quad &
 \end{alignat*}
 Then we have
 \begin{align}\label{eq:e1}
  0=\tilde A|_{x=0}=&
  \phi_0 ((a'_0)^2+a_1^2-1)^{2}+h_0 
  \left(1-(a'_0)^2\right)\\
  &\phantom{aaaaaaaaa}+\left(1-a_1^2\right) a''_0+2 a_1 
  a'_0 a'_1, \nonumber \\
  \label{eq:e2}
  0=\tilde A_x|_{x=0}=&
  -2 a_1 h_0 a''_0+4\phi_0 
  \left(1-(a'_0)^2-a_1^2\right) 
  \left(a_1 h_0+a'_0 a'_1\right)\\
  &\phantom{aa}+2 a_1 h_2 a'_0-\phi_1
  \left(-(a'_0)^2-a_1^2+1\right)^{2}
  \nonumber
  \\
  \nonumber
  &\phantom{aaaaa}-h_1 \left((a'_0)^2-1\right) -
  \left(a_1^2-1\right) a''_1
  +2 a_1 (a'_1)^2.
 \end{align}
 These two identities \eqref{eq:e1} 
 and \eqref{eq:e2} can be rewritten in the form
 \begin{align*}
  &\left(1-a_1^2\right) a''_0=
  \Psi_1(x,y,a_0,a'_0,a_1,a'_1), \\
  &-2 a_1 h_0 a''_0
  +\left(a_1^2-1\right) a''_1=
  \Psi_2(x,y,a_0,a'_0,a_1,a'_1),
 \end{align*}
 where  $\Psi_1$ and $\Psi_2$ are continuous functions
 of five variables. 
 Since $1-a_1^2(0)=1$,
 this gives a normal form of 
 a system of
 ordinary differential equations with 
 unknown functions $a_0$ and $a_1$.
 Moreover, this system of differential equations 
 satisfies the local Lipschitz condition, since 
 $\Psi_1$ and $\Psi_2$ are polynomials in
 $a_0$, $a'_0$, $a_1$ and $a'_1$.
 Here,
 \begin{equation}\label{sol}
  (a_0,a_1)=(y,0)
 \end{equation}
 gives a solution of this system of equations. 
 Then the uniqueness of the solution
 with the initial conditions \eqref{I1} and \eqref{I2} implies 
 that \eqref{sol} holds for $F$. As a consequence,
 we have $F(0,y)=(y,0,y)$, proving the assertion. 
\end{proof}

\begin{remark}
 If $F$ is of non-zero constant mean curvature $H$,
 then  $A_F-2H|B_F|^{3/2}$ vanishes
 identically. In this case, we also get
 the relations 
 $A_F|_{x=0}=(A_F)_x|_{x=0}=0$,
 which can be considered as a system of ordinarily
 equations like as in the above proof.
 However, this does not
 satisfy the local Lipschitz condition, and
 the above proof does not work in this 
 case. Fortunately, an analogue of
 Theorem \ref{thm:main2} can be proved
 for surfaces with non-zero constant mean curvature
 using a different approach
 (see \cite{UY} for details).
\end{remark}

As a corollary, we immediately 
get the following:

\begin{corollary}\label{cor:isol}
 Any light-like points on a surface in
 the class $\Y^3$ are not isolated.
\end{corollary}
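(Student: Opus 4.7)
The plan is to reduce the corollary immediately to Theorem~\ref{thm:main2}. Starting from an arbitrary light-like point $p$ of a surface $F \in \Y^3$, I would first normalize: translate in $\R^3_1$ so that $F(p) = \vect{0}$, and then rotate about the $t$-axis so that $F$ falls into the class $\I^3_L$ (with $p$ corresponding to $o \in \R^2$). This normalization is legitimate since the set $\Y^3$ is invariant under such rigid motions of $\R^3_1$, and the property of being an isolated light-like point is intrinsic.

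Once $F$ is arranged to lie in $\Y^3$ with the distinguished light-like point at $o$, I would split into the two alternatives provided by Theorem~\ref{thm:main2}. In case \ref{enum:K-1}, the gradient $\nabla B_F$ does not vanish at $o$, so the implicit function theorem produces a $C^1$ regular curve in the $xy$-plane along which $B_F \equiv 0$; every point of this curve is light-like by \eqref{eq:U-pm}, so $o$ is a limit of light-like points. In case \ref{enum:K-2}, Theorem~\ref{thm:main2} (more precisely, its proof) shows that after the normalization one has $F(0,y) = (y,0,y)$, equivalently $f(0,y) = y$ along the $y$-axis. Then $f_x(0,y) = 0$ and $f_y(0,y) = 1$, so by \eqref{eq:B} the function $B_F$ vanishes identically along the entire $y$-axis; hence the whole $y$-axis in the parameter domain consists of light-like points accumulating at $o$.

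In both cases I obtain a genuine one-dimensional family of light-like points through $o$, which precisely contradicts the possibility that $o$ is isolated. There is essentially no hard step here, since Theorem~\ref{thm:main2} does all the work; the only thing to verify is that the curve in case \ref{enum:K-1} consists of light-like points (immediate from $B_F = 0$), and that in case \ref{enum:K-2} the light-like line segment in $\R^3_1$ pulls back to a curve of light-like parameters in $U$ (immediate from the explicit form $F(0,y) = (y,0,y)$ extracted from the proof of Theorem~\ref{thm:main2}). Hence the corollary follows with no further computation.
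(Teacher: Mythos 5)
Your proof is correct and follows the same route as the paper, which simply records this corollary as an immediate consequence of Theorem~\ref{thm:main2}. You have merely spelled out the (easy) details: in case \ref{enum:K-1} the implicit function theorem gives a regular curve of zeros of $B_F$ through $o$, and in case \ref{enum:K-2} the identity $F(0,y)=(y,0,y)$ from the proof of the theorem forces $B_F(0,y)=0$, so in either case $o$ accumulates light-like points.
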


If a light-like point $o$ is non-degenerate,
then $B_F$ changes sign, that is, $F$ changes causal type.
So the following corollary is also obtained.

\begin{corollary}\label{cor:2}
 If an immersion $F\in\Y^3$
 does not change its causal type at $o$,
 then there exists a light-like line $L$ passing through $f(o)$
 such that the set of the light-like points of $F$
 is a regular curve $\gamma$ and the image of
 $F\circ \gamma$ lies on the line $L$.
\end{corollary}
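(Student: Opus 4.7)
The strategy is to reduce the statement to Theorem~\ref{thm:main2} by using the hypothesis to rule out its case~\ref{enum:K-1}. In case~\ref{enum:K-1}, $\nabla B_F(o)\ne\vect{0}$, so by the implicit function theorem $\{B_F=0\}$ is locally a smooth regular curve through $o$ across which $B_F$ takes opposite signs; this forces $F$ to change its causal type at $o$, contradicting the hypothesis. Therefore case~\ref{enum:K-2} must hold, yielding $\nabla B_F(o)=\vect{0}$ and a light-like line segment inside $F(\{B_F=0\})$ through $F(o)$.

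To pin down the line $L$ and the parameterising domain curve $\gamma$, I would re-examine the proof of Theorem~\ref{thm:main2}. The Lipschitz ODE system derived there, with the initial conditions \eqref{I1} and \eqref{I2}, has the unique solution $(a_0(y),a_1(y))=(y,0)$; hence $f(0,y)=y$ and $f_x(0,y)=0$, so that $F(0,y)=(y,0,y)$ parameterises the light-like line
\[
  L := \{(y,0,y)\,;\,y\in\R\},
\]
which passes through $F(o)=\vect{0}$. Taking $\gamma(y):=(0,y)$ in the $xy$-plane gives a regular real-analytic curve with $F\circ\gamma(y)\in L$, and the identity $B_F(0,y)=1-f_x(0,y)^2-f_y(0,y)^2=0$ shows that every point of $\gamma$ is a light-like point of $F$.

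The principal obstacle is to confirm that $\gamma$ is the \emph{entire} set of light-like points near $o$, not merely a subset. For this I would apply Lemma~\ref{lem:div} together with $a_0(y)=y$ and $a_1(y)=0$ to write $f(x,y)=y+\tfrac12 x^2\,h(x,y)$ for some $C^1$ function $h$; substituting into \eqref{eq:B} and simplifying produces a factorisation
\[
  B_F(x,y) = -x^2\,G(x,y)
\]
for a continuous function $G$ with $G(0,0)=h(0,0)^2+h_y(0,0)$. The hypothesis that $F$ does not change causal type means $B_F$ has a fixed sign off the light-like set in a neighbourhood of $o$; combined with the assumption that $F$ is a genuine (i.e.\ not identically light-like) immersion, this forces $G$ to have constant nonzero sign near $o$, so in particular $G(o)\ne 0$. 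Then $\{B_F=0\}=\{x=0\}$ locally, so the light-like set of $F$ near $o$ coincides exactly with $\gamma$, completing the argument.
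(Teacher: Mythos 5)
Your reduction to Theorem~\ref{thm:main2} is exactly the paper's argument: the authors simply observe that at a non-degenerate light-like point $B_F$ changes sign, so a surface that does not change causal type at $o$ must fall into case \ref{enum:K-2}, and the proof of that case already produces $(a_0,a_1)=(y,0)$, hence the light-like line $L=\{(y,0,y)\,;\,y\in\R\}$ and the regular curve $\gamma(y)=(0,y)$ of light-like points whose image under $F$ lies on $L$. Up to this point your proposal is correct and coincides with the paper, which in fact offers no justification beyond this for the remaining clause of the statement.

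The extra step you add --- showing that $\{B_F=0\}$ is \emph{exactly} the curve $\{x=0\}$ --- is where the gap lies. The factorization $B_F=-x^2G$ with $G=\bigl(h+\tfrac{x}{2}h_x\bigr)^2+h_y+\tfrac{x^2}{4}h_y^2$ is correct, and the hypothesis that $F$ does not change causal type does give that $G$ is everywhere $\geq 0$ or everywhere $\leq 0$ near $o$. But the inference ``hence $G$ has constant \emph{nonzero} sign near $o$, so $G(o)\neq 0$'' does not follow: a semi-definite continuous function can vanish at $o$, or along another curve through $o$, without vanishing identically, and ``$F$ is not identically light-like'' only rules out $G\equiv 0$. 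If $G$ vanished, say, along $\{y=0\}$, the light-like locus would be $\{x=0\}\cup\{y=0\}$, which is not a regular curve, and nothing in your argument excludes this possibility. Closing it would require an additional idea (for instance, applying Theorem~\ref{thm:main2} again at the nearby degenerate light-like points, or invoking real analyticity of $B_F$), none of which is supplied. To be fair, the paper itself is silent on this point and states the corollary as an immediate consequence of Theorem~\ref{thm:main2}\,\ref{enum:K-2}; but since you chose to argue the ``the light-like set \emph{is} a regular curve'' clause explicitly, the final step as written is a non sequitur.
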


We next give an application of Theorem \ref{thm:main2}
for light-like surfaces:

\begin{definition}
 Let $\sigma(t)$ ($t\in I$)
 be a space-like $C^\infty$-regular curve defined on a interval $I$.
 Since the orthogonal complement of $\sigma'(t)$ is Lorentzian,
 there exists a non-vanishing vector field $\xi(t)$
 along $\sigma$
 such that $\xi(t)$ points the light-like direction which is 
 orthogonal to $\sigma'(t)=d\sigma(t)/dt$, that is, it holds that
 \[
   \inner{\xi(t)}{\xi(t)}=\inner{\xi(t)}{\sigma'(t)}=0
 \]
 where $\inner{~}{~}$  means the canonical Lorentzian inner product in
 $\R^3_1$. 
 The possibility of such vector fields $\xi(t)$
 are essentially two up to a multiplication of 
 non-vanishing smooth functions. 
 Then the map
 \begin{equation}\label{eq:F}
    F(t,s):=\sigma(t)+s\, \xi(t)\qquad (t\in I, |s|<\epsilon)
 \end{equation}
 gives a light-like immersion if $\epsilon>0$ is 
 sufficiently small (This representation formula
 was given in Izumiya-Sato \cite{IS}).
 We call such a $F$ a 
 \emph{light-like ruled surface} 
 associated to the space-like curve $\sigma$).
\end{definition}

 For example, consider an ellipse
 $\sigma(t)=(0, a \cos t,\sin t)$
 on the $xy$-plane in $\R^3_1$, 
 where $a>0$ is a constant.
 Then an associated light-like surface is
 given by \eqref{eq:F} by setting
 \[
    \xi(t):=\left(\sqrt{a^2 \sin ^2t+\cos^2t},
             \cos t, a \sin t\right).
 \]
 Figure \ref{fig:0} presents the resulting light-like surface
 for $a=2$.
\begin{figure}[htb]
 \centering
          \includegraphics[height=5.0cm]{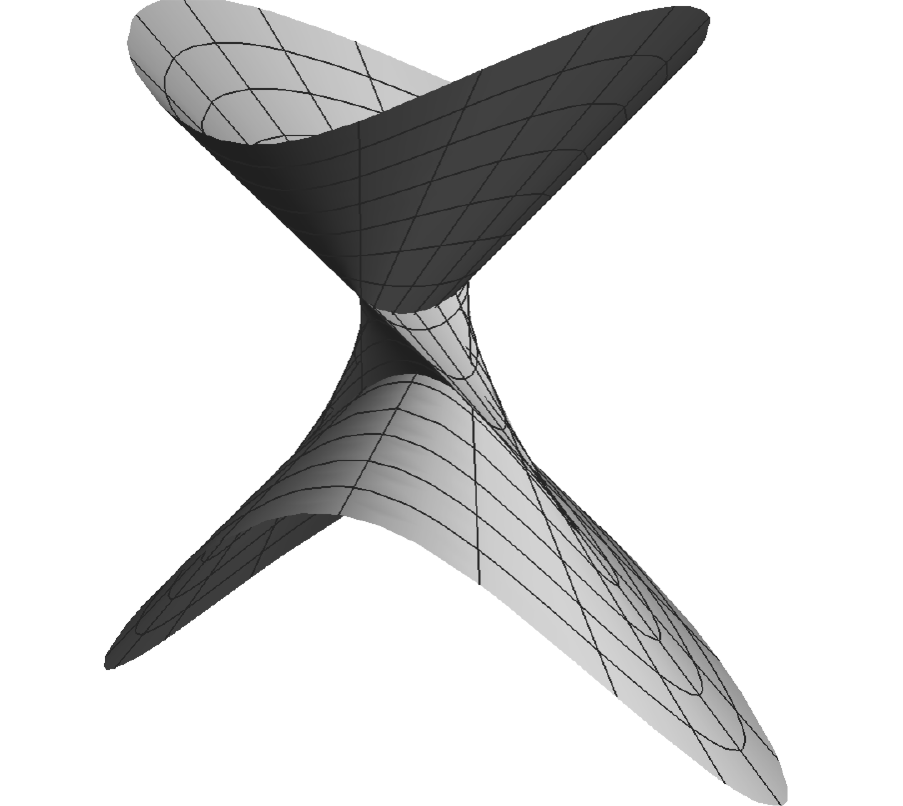}
  \caption{The light-like surface associated to an ellipse.}
  \label{fig:0}
\end{figure}

The following corollary asserts that light-like regular 
surfaces are locally  regarded as ruled surfaces.

\begin{corollary}\label{cor:null-ruled}
 A light-like surface germ $F\in \Lambda^\infty$ 
 can be paranmetrized by a light-like ruled
 surface along a certain space-like regular curve. 
\end{corollary}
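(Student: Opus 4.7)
The plan is to apply Theorem~\ref{thm:main2}\,(b) pointwise on the light-like surface, combined with the uniqueness of the null direction in a light-like tangent plane, to see that $F(U)$ is foliated by affine light-like lines, and then to choose a space-like transversal to obtain the ruled parametrization. Writing $F=(f(x,y),x,y)$ with $B_F=1-f_x^2-f_y^2\equiv 0$, I would first identify the null line field: from $F_x=(f_x,1,0)$ and $F_y=(f_y,0,1)$ a direct computation shows that $aF_x+bF_y$ is null iff $(af_y-bf_x)^2=0$, so the unique null direction in each tangent plane is spanned by
\[
  \xi_F:=f_xF_x+f_yF_y=(1,f_x,f_y),
\]
which is a smooth, non-vanishing null vector field along $F$.

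Since $B_F\equiv 0$, also $\nabla B_F\equiv 0$ on $U$. The crux (and main obstacle) is promoting Theorem~\ref{thm:main2}\,(b), which produces a light-like line segment only through a single designated base point, to a global foliation. For an arbitrary $p\in U$, I would translate in domain and range and rotate about the $t$-axis to enforce \eqref{eq:normalize} at the new origin; these operations preserve the class $\Lambda^\infty$, so Theorem~\ref{thm:main2}\,(b) applies and yields a light-like line segment in $F(U)$ through $F(p)$. By uniqueness of the null direction in the light-like tangent plane, this segment must be parallel to $\xi_F(p)$. Hence the light-like lines on $F(U)$ are precisely the integral lines of $\xi_F$.

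Next take the space-like transversal $\sigma(x):=F(x,0)=(f(x,0),x,0)$; by \eqref{eq:normalize} we have $\sigma'(0)=(0,1,0)$, which is space-like, so $\sigma$ is a space-like regular curve near $0$. Define $\xi(x):=(1,f_x(x,0),f_y(x,0))$, which satisfies
\[
  \inner{\xi}{\xi}=-1+f_x^2+f_y^2=0,\qquad
  \inner{\xi}{\sigma'}=-f_x+f_x=0,
\]
so $\xi$ meets the conditions in the definition of a light-like ruled surface. The map $\tilde F(x,s):=\sigma(x)+s\,\xi(x)$ is an immersion near $(0,0)$ because $\sigma'(0)=(0,1,0)$ and $\xi(0)=(1,0,1)$ are linearly independent. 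By the previous paragraph, for each small $x$ the line $\{\sigma(x)+s\xi(x):|s|<\epsilon\}$ lies in $F(U)$, so the image of $\tilde F$ is a $2$-dimensional submanifold locally contained in the $2$-dimensional embedded image of $F$; equality of dimensions then forces the two images to coincide on a neighborhood of $\mb 0$, exhibiting $F$ as a light-like ruled surface along the space-like curve $\sigma$.
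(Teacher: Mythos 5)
Your proof is correct and follows essentially the same route as the paper: both invoke Theorem~\ref{thm:main2}\,(b) (via Corollary~\ref{cor:2}) to get the foliation of $F(U)$ by light-like lines and then reparametrize as $\sigma(t)+s\,\xi(t)$ with $\sigma(x)=F(x,0)$ and the null normal $\xi=(1,f_x,f_y)$, which on $y=0$ is exactly the paper's $\xi^+$. You are somewhat more careful than the paper in two places — explicitly identifying the unique null tangent direction and explicitly renormalizing to apply the theorem at every base point rather than only at $o$ — but these are refinements of the same argument, not a different one.
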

\begin{proof}
 Let $F$ be a light-like surface such that
 $\iota_F(x,0)=\psi(x)$ as in \eqref{eq:psi},
 where $\psi(0)=\dot{\psi}(0)=0$ ($\dot{~}=d/dx$).
 Then it holds that $\sigma(x):=F(x,0)=(\psi(x),x,0)$
 is a space-like curve for sufficiently small $x$.
 There are $\pm$-ambiguity of light-like vector fields
 \[
   \xi^\pm(x):=\left(1,\dot\psi(x),\pm \sqrt{1-\dot\psi(x)^2}\right)
 \]
 along the curve $\sigma(x)$ perpendicular to $\dot\sigma(x)$.
 By Corollary \ref{cor:2},
 $F$ must be a ruled surface foliated by light-like lines.
 Since the light-like line in the image of $F$
 passing through the origin is 
 $y\mapsto (y,0,y)$, the light-like ruled surface
 \begin{equation}\label{eq:LF}
  (t,s)\mapsto \sigma(t)+s\, \xi^+(t)
 \end{equation}
 gives a new parametrization of the surface $F$,
 that proves the assertion.
\end{proof}

It should be remarked that the property of 
light-like points (cf.~Theorem \ref{thm:main2})
can be generalized for surfaces in
arbitrarily given Lorentzian 3-manifolds, see \cite{UY}.
It is well-known that space-like ZMC-surfaces have
non-negative Gaussian curvature.
Regarding this fact,
we prove the following assertion on the  sign of
Gaussian curvature at non-degenerate light-like points:

\begin{proposition}\label{prop:PK1}
 Let $F$ be an immersion in the class $\Y^r$ $(r\ge 3)$.
 Suppose that $o$ is a non-degenerate light-like point.
 Then the Gaussian curvature function $K$ diverges 
 to $\infty$ at $o$.
\end{proposition}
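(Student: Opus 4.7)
My approach is to derive an explicit formula for the Gaussian curvature $K$ of the graph $F=(f(x,y),x,y)$, and then use the two hypotheses — $\phi$-admissibility and non-degeneracy of $o$ — to show the numerator of this expression is bounded away from zero near $o$, while the denominator (a power of $B_F$) vanishes at $o$.

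First I would compute $K$ by choosing the Lorentz-orthogonal vector $N=(1,f_x,f_y)$, for which $\inner{N}{N}=-B_F$, so that the unit normal is $n=N/\sqrt{|B_F|}$ with $\inner{n}{n}=-\operatorname{sgn}(B_F)$. A direct computation gives $\det(\text{I})=B_F$ and $\det(\text{II})=(f_{xx}f_{yy}-f_{xy}^{2})/|B_F|$. Carrying the sign $\inner{n}{n}$ through the Gauss equation — which is valid in both the space-like and time-like cases — yields the uniform formula
$$ K = -\frac{f_{xx}f_{yy}-f_{xy}^{2}}{B_F^{\,2}} $$
on $U_{*}=U_{+}\cup U_{-}$. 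As a sanity check, the hyperboloid $t=\sqrt{1+x^{2}+y^{2}}$, which is a model of the hyperbolic plane, gives $K=-1$ at the origin under this formula.

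Next I extract information at $o$ from the two hypotheses. From the normalization $f_x(0,0)=0$, $f_y(0,0)=1$, the defining expression \eqref{eq:AB} of $A_F$ reduces at $o$ to $A_F(o)=f_{yy}(0,0)$. Since $F\in\Y^{r}_{\phi}$ means $A_F=\phi B_F^{\,2}$ identically and $B_F(o)=0$, we get $f_{yy}(0,0)=0$. Non-degeneracy of $o$, in the sense of Theorem~\ref{thm:main2}, is the assertion $\nabla B_F(o)\neq\vect{0}$; differentiating \eqref{eq:B} and using the same normalization produces
$$ \nabla B_F\big|_{o}=-2\bigl(f_{xy}(0,0),\,f_{yy}(0,0)\bigr), $$
so combined with $f_{yy}(0,0)=0$ this forces $f_{xy}(0,0)\neq 0$. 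Consequently $(f_{xx}f_{yy}-f_{xy}^{2})\big|_{o}=-f_{xy}(0,0)^{2}<0$, which means the numerator of $K$ is strictly positive at $o$ and, by continuity, on a full punctured neighborhood of $o$. Since $B_F^{\,2}\to 0$, the formula for $K$ forces $K\to +\infty$ at $o$.

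The only delicate step is pinning down the sign in the Lorentzian Gauss curvature formula; the hyperboloid check eliminates this ambiguity. Everything else is a direct first-order computation driven by the hypotheses: $\phi$-admissibility kills $f_{yy}(0,0)$, and non-degeneracy then forces $f_{xy}(0,0)\neq 0$, which together make the numerator of $K$ strictly positive while the denominator collapses.
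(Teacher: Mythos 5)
Your proposal is correct and follows essentially the same route as the paper: the paper also uses the formula $K=-C_F/(B_F)^2$ with $C_F=f_{xx}f_{yy}-(f_{xy})^2$, derives $f_{yy}(o)=0$ from $A_F=\phi B_F^2$ and $B_F(o)=0$, and concludes $C_F(o)=-f_{xy}(o)^2<0$ from non-degeneracy (written there as $C_F(o)=-a_1'(0)^2$ with $a_1(y)=f_x(0,y)$). Your extra verification of the sign in the curvature formula via the hyperboloid is a nice sanity check but not a departure in method.
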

When $F$ is of ZMC, the assertion
was proved in Akamine \cite{A2}.

\begin{proof}
 Let $f=\iota_F$ is the function associated with $F\in \Y^r$ 
 (cf.\ \eqref{eq:iota}) with 
 \eqref{eq:f-rep} and  set
 \[
   C_F:=f_{xx}f_{yy}-(f_{xy})^2.
 \]
 Then the Gaussian curvature of $F$ is given by
 \begin{equation}\label{eq:BC}
  K:=-\frac{C_F}{(B_F)^2},
 \end{equation}
 where $B_F$ is the function as in \eqref{eq:B}.
 Since the function $A_F$ as in \eqref{eq:AB}
 satisfies $A_F(o)=a_0''(0)=0$, we have
 \[
    C_F(o)=h(o) a''_0(0)-a'_1(0)^2=-a'_1(0)^2.
 \]
 Here $a'_1(0)\ne 0$ since $o$ is a non-degenerate light-like
 point. Thus $C_{F}(o)\ne 0$ holds, and
 we get the conclusion because of the fact $B_F(o)=0$.
\end{proof}

\section{Properties of surfaces in $\Y^\omega$}

We denote by $\Y^r_{a}$ (resp.\ $\Y^r_b$)
the subset of $\Y^r$ consisting of surfaces
such that the origin $o$ is a non-degenerate
(resp.\ degenerate) light-like point.
Then
\[
   \Y^r:=\Y^r_a\cup \Y^r_{b}
\]
holds.
We next define two subsets of $\Z^r$ (cf.\ \eqref{eq:Z}) as
\begin{align*}
 \Z_{a}^r&:=\{F \in \Z^r\,;\, \gamma_F(0)\ne (0,0)\}
 =\{F \in \Z^r\,;\, \nabla B_F(0,0)\ne \vect{0}\}
  =\Y^r_a\cap \Z^r, \\
 \Z_{b}^r&:=\{F \in \Z^r\,;\, \gamma_F(0)= (0,0)\}
 =\{F \in \Z^r\,;\, \nabla B_F(0,0)=\vect{0}\}
  =\Y^r_b\cap \Z^r,
\end{align*}
where $B_F$ is the function as in \eqref{eq:B}.

As explained in the introduction,
surfaces in  $\Z_{a}^r$ can be constructed using the
formula as in \eqref{eq:Bj}.
On the other hand, to get ZMC-surfaces in $\Z^\omega_b$,
we can apply Corollary \ref{cor:Z00}.
However, it is only a general existence result,
and is not useful if
one would like to know the precise behavior 
of the surfaces along the degenerate light-like lines.
Here, we focus to the class $\Z_b^\omega$.
We first show that $\Y_b^\omega$
and will sow that surfaces in the class $\Y_b^\omega$
have quite similar properties as zero-mean curvature surfaces
in the class $\Z^\omega_b$:
Each surface $F\in \Y^\omega_b$ is expressed as
the graph of a function 
\begin{equation}\label{zm22b}
 f(x,y):=y+\frac{\alpha_F(y)}{2}x^2+\frac{\beta_F(y)}{3}x^3
  +h(x,y)x^4, 
\end{equation}
where 
$\alpha_F$, $\beta_F$ and $h$ are certain real analytic functions.
In fact, as seen in the proof of Theorem~\ref{thm:main2},
$a_1(y) = 0$ holds when $o$ is a degenerate light-like point,
where $a_1(y):=f_x(0,y)$.
We call $\alpha_{F}(y)$ and $\beta_{F}(y)$ 
in \eqref{zm22b}
the \emph{second approximation function} and
the \emph{third approximation function} of $F$,
respectively. 
These functions give the following 
approximation of $F$:
\begin{equation}\label{eq:first}
f(x,y)\approx y+\frac{\alpha_F(y)}{2}x^2+\frac{\beta_F(y)}{3}x^3.
\end{equation}

\begin{proposition}\label{prop:2ode}
 For $F\in \Y_b^\omega$,
 there exists a real number 
 $\mu_F$ {\rm(}called the \emph{characteristic} of $F${\rm)}
 such that $\alpha_F$ and $\beta_F$ satisfy 
 \begin{align}\label{eq:c}
  & \alpha'_F+\alpha_F^2+\mu_F=0, \\ 
    \label{eq:c2}
  & \beta''_F+4 \alpha_F \beta'_F=0.
 \end{align}
 Moreover, if $\mu_F>0$ {\rm(}resp.\ $\mu_F<0${\rm)}
 then $F$ has no time-like points
 {\rm(}resp.\ no space-like points{\rm)}.
 In particular, if $F$ changes causal type, the $\mu_F=0$.
\end{proposition}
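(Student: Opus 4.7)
The plan is to substitute the expansion \eqref{zm22b}, namely $f(x,y) = y + \frac{\alpha_F(y)}{2}x^2 + \frac{\beta_F(y)}{3}x^3 + O(x^4)$, into the admissibility relation $A_F = \phi B_F^2$ and to compare Taylor coefficients in $x$ at $x=0$. Since $B_F$ vanishes to order at least two in $x$, the right-hand side $\phi B_F^2$ is of order at least $x^4$, so the $x^2$ and $x^3$ coefficients of $A_F$ must vanish identically in $y$. These two conditions will produce \eqref{eq:c} and \eqref{eq:c2} respectively.

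First I would expand $f_x$, $f_y$, $f_{xx}$, $f_{xy}$, $f_{yy}$ as power series in $x$ with $y$-dependent coefficients, carrying enough terms to capture all contributions through order $x^3$. A direct computation should give
\[
  B_F = -\bigl(\alpha_F'+\alpha_F^2\bigr)\,x^2 + O(x^3),
\]
so $B_F^2 = O(x^4)$, and
\[
  A_F = \frac{1}{2}\bigl(\alpha_F''+2\alpha_F\alpha_F'\bigr)\,x^2
        + \frac{1}{3}\bigl(\beta_F''+4\alpha_F\beta_F'\bigr)\,x^3 + O(x^4).
\]
The vanishing of the $x^2$ coefficient of $A_F$ reads $(\alpha_F'+\alpha_F^2)' = 0$, so $\alpha_F'+\alpha_F^2$ is a constant; setting $\mu_F := -(\alpha_F'+\alpha_F^2)$ yields \eqref{eq:c}, while the vanishing of the $x^3$ coefficient is exactly \eqref{eq:c2}.

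For the second half, substituting \eqref{eq:c} back into the expansion of $B_F$ rewrites it as $B_F = \mu_F x^2 + O(x^3)$ on a neighborhood of $o$. Hence, on a sufficiently small neighborhood, if $\mu_F > 0$ then $B_F \geq 0$ (with equality only on the locus $\{x=0\}$), so the set $U_-$ of time-like points is empty near $o$; the case $\mu_F < 0$ is symmetric. If $F$ changes causal type near $o$, then both $U_+$ and $U_-$ meet every neighborhood of $o$, so $\mu_F$ can be neither positive nor negative, forcing $\mu_F = 0$.

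The main obstacle will be the bookkeeping in the middle step: the $x^3$ coefficient of $A_F$ collects contributions from the $x^3$-term of $f_{yy}$, the cross product $2f_xf_yf_{xy}$, and the product of the $x^2$-term of $1-f_y^2$ with the $x$-term of $f_{xx}$, and several partial cancellations are required to arrive at the clean combination $\beta_F''+4\alpha_F\beta_F'$ rather than a messier expression. Once this expansion is secured, identifying $\mu_F$ as a first integral of the Riccati-type relation for $\alpha_F$ and deducing the sign consequences for $B_F$ are essentially immediate.
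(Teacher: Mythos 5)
Your proposal is correct and follows essentially the same route as the paper: both arguments reduce to the observation that $A_F=\phi B_F^2$ with $B_F=O(x^2)$ forces the $x^2$- and $x^3$-coefficients of $A_F$ to vanish (the paper phrases this via L'H\^opital applied to $\phi=A_F/B_F^2$, you phrase it as $\phi B_F^2=O(x^4)$, which is the same computation), and these two coefficients are exactly $\tfrac12(\alpha_F''+2\alpha_F\alpha_F')$ and $\tfrac13(\beta_F''+4\alpha_F\beta_F')$. Your expansions of $A_F$ and $B_F$ check out, and the sign discussion via $B_F=\mu_F x^2+O(x^3)$ matches the paper's use of $B_{xx}(0,y)=2\mu_F$.
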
 

When $F\in \Z_b^\omega$, this 
assertion for $\alpha:=\alpha_F$ was proved in \cite{CR}.
So the above assertion is its generalization. 

\begin{proof}
 Since $F$ in $\Y^\omega$, there exists a function 
 $\varphi\in C^{\omega}_o(\R^2)$ such that $F\in\Y^{\omega}_\phi$.
 Let $f:=\iota_F$ and set $A:=A_F$ and $B:=B_F$
 as in \eqref{eq:B} and \eqref{eq:AB}, respectively.
 Then by \eqref{zm22b} implies that
 \begin{equation}\label{eq:b-zero}
      B(0,y) = B_x(0,y)=0.
 \end{equation}
 Since $\phi={A}/{B^2}$ is a smooth function, the L'Hospital rule
 yields that
 \begin{align*}
  \phi(0,y)
  &=\lim_{x\to 0}\frac{A(x,y)}{B(x,y)^2}
  =\lim_{x\to 0}\frac{A_x(x,y)}{2B_x(x,y)B(x,y)} \\
  &=\lim_{x\to 0}\frac{A_{xx}(x,y)}{2B_{xx}(x,y)B(x,y)+2B_{x}(x,y)B_x(x,y)}\\
  &=\lim_{x\to 0}\frac{A_{xxx}(x,y)}{2B_{xxx}(x,y)B(x,y)+6B_{xx}(x,y)B_x(x,y)}.
 \end{align*}
 Thus, \eqref{eq:b-zero} yields that we have
 \[
        0=A(0,y)=A_x(0,y)=A_{xx}(0,y)=A_{xxx}(0,y).
 \]
 Here
 $A|_{x=0}=A_x|_{x=0}=0$
 does not produce any restrictions for $\alpha:=\alpha_{F}$ and 
 $\beta:=\beta_{F}$. 
 On the other hand, we have
 \begin{equation}\label{eq:a2}
    0=A_{xx}|_{x=0}=2\alpha\alpha'+\alpha'',
 \end{equation}
 where the prime means the derivative with respect to $y$.
 Hence $\alpha'+\alpha^2$
 is a constant function, and get the first relation. 
 We then get 
 \begin{equation}\label{eq:a2a}
          0=A_{xxx}|_{x=0}=4 \alpha \beta'+\beta'',
 \end{equation}
 which yields the second assertion.
 The last assertion follows from the fact that
 $B_{xx}(0,y)=-2(\alpha'+\alpha^2)= 2\mu_F$.
\end{proof}

We can find the solution $\alpha=\alpha_F$ 
of the ordinary differential equation
\eqref{eq:c} under the conditions
\[
   \alpha(0)=\ddot u(0),\qquad
      \mu_F=-\bigl(\ddot u(0)^2+\ddot v(0)\bigr)
\]
for a given initial curve $\gamma(x)=(u(x),v(x))$.
By a homothetic change
\[
   \widetilde{F}(x,y):=(\tilde f(x,y),x,y)\qquad
   \left(\tilde f(x,y):= \frac{1}{m}f(mx,my),~m>0\right),
\]
one can normalize the characteristic $\mu_F$ to be $-1$, $0$ or $1$.
In fact, as shown in \cite{CR},
\[
 \alpha^+:=-\tan(y+c)
 \qquad (|c|<\pi/2)
\]
is a general solution of \eqref{eq:c} for $\mu_F=1$,
\[
  \alpha^0_{I}:=0 \quad
  \mbox{and}
  \quad \alpha^0_{\II}:= \frac{1}{y+c} 
  \qquad (c\in \R\setminus \{0\})
\]
are the solutions for $\mu_F=0$, and
\begin{alignat*}{2}
 \alpha^-_{I}&:=\tanh(y+c)
 \qquad &(c&\in\R),
 \\
 \alpha^-_{\II}&:=\coth(y+c)
 \qquad &(c&\in \R\setminus \{0\}),\\
 \alpha^-_{\III}&:=\pm 1 
\end{alignat*}
are the solutions for $\mu_F=-1$.
Thus, as pointed out in \cite{CR},
$\Y^\omega_b$ consists of the following six subclasses:
\[
  \Y^+,\quad \Y^0_{I},\quad 
  \Y^0_{\II},\quad \Y^-_{I},
  \quad \Y^-_{\II},
  \quad \Y^-_{\III}.
\]

\begin{remark}
 To find surfaces in the class
 $\Y^-_{\III}$, we may set $\alpha^-_{\III}:=1$ without loss of
 generality. In fact, if $F\in \Y^\omega_b$ satisfies $\alpha_F=-1$,
 then we can write
 \[
    F=\bigl(f(x,y),x,y\bigr),\qquad f(x,y)=y-\frac{x^2}2+k(x,y)x^3,   
 \]
 where $k(x,y)$ is a $C^\omega$-function defined on
 a neighborhood of the origin.
 If we set $X:=-x,Y:=y$ and set $\tilde F:=-F$, then
 we have
 \[
   \tilde F(X,Y)=\left(Y+\frac{X^2}2+k(-X,Y)X^3,X,Y\right).
 \]
 Thus $\alpha_{\tilde F}=1$.
\end{remark}

Like as in the case of $\alpha=\alpha_F$,
we can find the solution $\beta:=\beta_F$ of
the ordinary differential equation \eqref{eq:c2}
with the given initial condition
$2(\beta(0),\beta'(0))=\dddot \gamma_{F}(0)$.
In fact, $\beta$ can be written explicitly for each 
$\alpha=\alpha^+$, $\alpha^0_I$, $\alpha^0_{\II}$,
$\alpha^-_I$, $\alpha^-_{\II}$, $\alpha^-_{\III}$
as follows:
\begin{align*}
 \beta^+&=c_1 \left(2+\sec^2(y+c)\right)\tan(y+c)+c_2, \\
 \beta^0_I&=c_1 y+c_2, \\
 \beta^0_{\II}&=\frac{c_1}{(y + c)^3}+c_2, \\
 \beta^-_{I}&= c_1 \left(2 + 
 \sech^2(y + c)\right) \tanh(y + c)+c_2, \\
 \beta^-_{\II}&=
        c_1 \left(2-\csch^2(y+c)\right)\coth(y+c)+c_2, \\
 \beta^-_{\III}&=c_1e^{\pm 4y}+c_2.
\end{align*}
In particular, we get the following assertion:

\begin{proposition}
 The second and the third approximation functions of each 
 $F\in \Y^\omega_b$
 can be written in terms of elementary functions.
\end{proposition}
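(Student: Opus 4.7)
The plan is to read the result directly off Proposition~\ref{prop:2ode}, which asserts that $\alpha_F$ satisfies the Riccati equation $\alpha'_F+\alpha_F^2+\mu_F=0$ with constant $\mu_F$, and that $\beta_F$ satisfies the linear equation $\beta''_F+4\alpha_F\beta'_F=0$. First I would normalize $\mu_F\in\{-1,0,+1\}$ by the homothety already discussed. Since $\mu_F$ is constant, the Riccati equation separates to $d\alpha/(\alpha^2+\mu_F)=-dy$, whose primitive is $\arctan$, $-1/\alpha$, or $\operatorname{arctanh}/\operatorname{arccoth}$ according to the sign of $\mu_F$; inverting gives exactly the six elementary branches $\alpha^+$, $\alpha^0_I$, $\alpha^0_{II}$, $\alpha^-_I$, $\alpha^-_{II}$, $\alpha^-_{III}$ listed before the proposition. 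So $\alpha_F$ is elementary in every case.

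For $\beta_F$ the key observation is that \eqref{eq:c2} is first order linear in $\beta'_F$. Dividing through by $\beta'_F$ and integrating once yields
\[
  \beta'_F(y) = c_1\exp\!\Bigl(-4\textstyle\int \alpha_F(y)\,dy\Bigr),
\]
and a second integration recovers $\beta_F$. The claim thus reduces to checking, for each of the six normal forms of $\alpha_F$, that both $\int\alpha_F\,dy$ and $\int\exp(-4\int\alpha_F\,dy)\,dy$ remain elementary.

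This last step is routine rather than a real obstacle. In the three non-constant families $\int\alpha_F\,dy$ is, up to sign, one of $\log|\cos|$, $\log|\cosh|$, $\log|\sinh|$, so $\exp(-4\int\alpha_F)$ equals $\sec^4$, $\sech^4$, or $\csch^4$; in the remaining cases the integrand becomes $e^{\mp 4y}$, a constant, or $(y+c)^{-4}$. Each admits an elementary antiderivative via the reduction $\sec^4 u=\sec^2u\,(1+\tan^2u)$ and its hyperbolic/rational analogues, producing precisely the formulas $\beta^+, \beta^0_I, \beta^0_{II}, \beta^-_I, \beta^-_{II}, \beta^-_{III}$ displayed in the excerpt. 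The only pitfall one might worry about---that some $\exp(-4\int\alpha_F)$ fail to admit an elementary primitive---does not materialize, because each normalized $\alpha_F$ is trigonometric, hyperbolic, exponential, or rational, and the six integrals are all standard.
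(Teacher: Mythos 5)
Your proposal is correct and follows essentially the same route as the paper: the paper's ``proof'' consists precisely of the displayed explicit solutions of \eqref{eq:c} (quoted from \cite{CR}) and \eqref{eq:c2}, and you simply supply the derivation --- separation of variables for the Riccati equation and reduction of order plus two elementary integrations for $\beta_F$ --- that the paper leaves implicit. The only cosmetic remark is that the constant branches $\alpha^-_{\III}=\pm1$ are the equilibria of \eqref{eq:c} and are not literally produced by dividing by $\alpha^2+\mu_F$, and that the vanishing of $\beta'_F$ must be treated as the (trivially elementary) constant case; neither affects the argument.
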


The following assertion implies that 
our approximation for $F\in \Y^\omega$
itself is an element of  $\Y^\omega$:

\begin{proposition}\label{prop:N0}
 Let $\alpha$ and  $\beta$ be analytic functions satisfying
 \begin{equation}\label{eq:alpha-beta}
   \alpha''+2\alpha\alpha'=0,\qquad \beta''+4\alpha\beta'=0.
 \end{equation}
 Then the immersion
 \[
   F_{\alpha,\beta}(x,y):=
       \left(y+\frac{\alpha(y)}{2}x^2+\frac{\beta(y)}{3}x^3,x,y\right)
 \]
 belongs to the class $\Y^\omega$. 
\end{proposition}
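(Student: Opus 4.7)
My plan is to verify by direct computation that $\phi := A_F/B_F^2$ extends to a real analytic germ at the origin, where $F := F_{\alpha,\beta}$ and $A_F, B_F$ are as in \eqref{eq:AB} and \eqref{eq:B}. Since the height function $f(x,y) = y + \frac{\alpha(y)}{2}x^2 + \frac{\beta(y)}{3}x^3$ is polynomial of degree $3$ in $x$ with analytic coefficients in $y$, all of its partial derivatives---and hence both $A_F$ and $B_F$---are polynomials in $x$ with analytic coefficients in $y$; the issue is whether $B_F^2$ analytically divides $A_F$ in the ring of germs of analytic functions at the origin.

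First, I would observe that $f_x$ and $f_y-1$ both vanish in $x$ to order $\geq 2$, so the constant and linear (in $x$) coefficients of $B_F = 1-f_x^2-f_y^2$ vanish automatically, while the coefficient of $x^2$ is $-(\alpha^2+\alpha')$. The first hypothesis $\alpha''+2\alpha\alpha' = 0$ says that $\alpha^2+\alpha' \equiv -\mu$ for some real constant $\mu$, so $B_F = \mu x^2 + O(x^3)$. Next, I would verify that the $x^2$ and $x^3$ coefficients of $A_F$ also vanish identically; these are precisely $\tfrac12(\alpha''+2\alpha\alpha')$ and $\tfrac13(\beta''+4\alpha\beta')$ respectively (cf.~\eqref{eq:a2} and \eqref{eq:a2a} in the proof of Proposition~\ref{prop:2ode}), both zero by hypothesis. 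Hence $A_F = O(x^4)$.

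In the case $\mu \neq 0$ this already suffices: $B_F/x^2$ is an analytic unit at the origin (its value there being $\mu \neq 0$), so writing $A_F = x^4 \widetilde A$ and $B_F = x^2 \widetilde B$ with $\widetilde A, \widetilde B$ analytic and $\widetilde B$ invertible in the local ring, we obtain $\phi = \widetilde A / \widetilde B^2$ analytic, and $F_{\alpha,\beta}\in\Y^\omega_{\phi}$ as required.

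The main obstacle is the degenerate case $\mu = 0$, in which $B_F$ vanishes in $x$ to order $\geq 3$, so that $B_F^2$ vanishes to order $\geq 6$; one must then verify that $A_F$ vanishes in $x$ to the same order. I would handle this by working subclass by subclass---namely the families $\Y^0_{I}$ (where $\alpha \equiv 0$) and $\Y^0_{\II}$ (where $\alpha = 1/(y+c)$) identified in Section~4---using the explicit elementary-function forms of $\alpha$ and $\beta$ listed there to perform the verification directly. Alternatively, one can differentiate \eqref{eq:alpha-beta} to produce higher-order identities for $\alpha$ and $\beta$ and then show inductively that $\partial_x^k A_F|_{x=0}$ vanishes for enough $k$ to cancel the leading powers of $B_F^2$, so that $\phi$ extends analytically at the origin in this case as well.
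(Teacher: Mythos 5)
Your reduction of the problem to analytic divisibility of $A_F$ by $B_F^2$ is the right reading of the definition of $\Y^\omega$, and your treatment of the case $\mu\ne 0$ is complete and correct: there $B_F/x^2$ is a unit of the local ring and $A_F=O(x^4)$ suffices. You have in fact already gone further than the paper, whose entire proof is the sentence ``This assertion follows from the proof of Proposition~\ref{prop:2ode} immediately.'' That computation only yields $A_F|_{x=0}=(A_F)_x|_{x=0}=(A_F)_{xx}|_{x=0}=(A_F)_{xxx}|_{x=0}=0$, i.e.\ $A_F=O(x^4)$, which is exactly the part you verified via \eqref{eq:a2} and \eqref{eq:a2a}.

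The gap is in your plan for the case $\mu=0$: the vanishing you propose to verify there is false, so neither the subclass-by-subclass check nor the inductive argument can succeed. Take $\alpha=1/(1+y)$ and $\beta=0$ (the subclass $\Y^0_{\II}$ with $c=1$; both equations in \eqref{eq:alpha-beta} hold). Then $f=y+x^2/(2(1+y))$, and a direct computation gives
\[
 B_F=-\frac{x^4}{4(1+y)^4},\qquad A_F=-\frac{x^4}{4(1+y)^5},
\]
so any $\phi$ with $A_F=\phi B_F^2$ would have to equal $-4(1+y)^3/x^4$ off the line $x=0$; this is unbounded near the origin, so no continuous (let alone analytic) $\phi$ exists and $F_{\alpha,\beta}\notin\Y^\omega$. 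The same happens for $\alpha=0$, $\beta=1+y$, where $A_F=\tfrac{2}{3}(1+y)x^4+\cdots$ while $B_F^2=O(x^6)$ --- exactly the order count you set up. So the obstacle you isolated is genuine and cannot be removed: the statement holds when $\mu\ne 0$ (your argument) and when $A_F$ happens to vanish identically, but it fails for general data with $\mu=0$. Your computation, pushed to completion, refutes the degenerate case rather than proving it; the honest conclusion is that the conditions \eqref{eq:alpha-beta} are necessary (which is all the paper's one-line proof actually delivers) but not sufficient for membership in $\Y^\omega$.
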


This assertion follows from the proof of
Proposition \ref{prop:2ode} immediately.
Moreover, the following assertion holds:

\begin{proposition}\label{thm:N1}
 We fix an analytic function $\phi\in C^\omega_o(\R^2)$.
 Let $\alpha$, $\beta$ be analytic functions satisfying
 \eqref{eq:alpha-beta}.
 Then there exists an immersion
 $F\in \Y^\omega_{\phi}$ 
 such that $\alpha=\alpha_F$ and $\beta=\alpha_F$.
 In the case of $\phi=0$,
 this implies the existence of  
 $F_{\alpha,\beta}\in \Z^\omega_{b}$.
\end{proposition}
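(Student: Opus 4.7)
The plan is to realize $F$ as the Cauchy--Kovalevski solution produced by Theorem~\ref{thm:E0} for a simple polynomial initial curve whose Taylor coefficients encode exactly the four values $\alpha(0)$, $\alpha'(0)$, $\beta(0)$, $\beta'(0)$, and then use the ODE characterization in Proposition~\ref{prop:2ode} together with uniqueness of ODE solutions to upgrade the resulting matching of initial data to the full equalities $\alpha_F \equiv \alpha$ and $\beta_F \equiv \beta$ on a neighborhood of $0$.

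Concretely, I would first take
\[
\gamma(x):=\left(\frac{\alpha(0)}{2}x^{2}+\frac{\beta(0)}{3}x^{3},\;1+\frac{\alpha'(0)}{2}x^{2}+\frac{\beta'(0)}{3}x^{3}\right)\in C^{\omega}_{(0,1)}(\R,\R^{2}),
\]
which visibly satisfies $\dot\gamma(0)=(0,0)$. Applying Theorem~\ref{thm:E0} with the prescribed $\phi$ yields a unique immersion germ $F\in\Y^{\omega}_{\phi}$ with $\gamma_{F}=\gamma$, and the degeneracy clause of that theorem places $F$ in $\Y^{\omega}_{b}$. Theorem~\ref{thm:main2}\ref{enum:K-2} then applies: its proof gives $f(0,y)=y$ and $f_{x}(0,y)=0$ for $f:=\iota_{F}$, so $f$ admits the normal form~\eqref{zm22b}, which defines the second and third approximation functions $\alpha_{F}, \beta_{F}$ of $F$.

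Comparing the $x$-Taylor expansions of $f(x,0)$ and $f_{y}(x,0)$ read off from \eqref{zm22b} with the two components of $\gamma$ gives $\alpha_{F}(0)=\alpha(0)$, $\alpha_{F}'(0)=\alpha'(0)$, $\beta_{F}(0)=\beta(0)$, and $\beta_{F}'(0)=\beta'(0)$. By Proposition~\ref{prop:2ode}, $\alpha_{F}'+\alpha_{F}^{2}+\mu_{F}=0$ with $\mu_{F}=-\alpha'(0)-\alpha(0)^{2}$; integrating \eqref{eq:alpha-beta} shows that $\alpha$ satisfies the same first-order ODE with the same initial value, so $\alpha_{F}\equiv\alpha$ by uniqueness. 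Once $\alpha_{F}=\alpha$ is established, $\beta_{F}$ and $\beta$ both satisfy the linear second-order ODE $\beta''+4\alpha\beta'=0$ with identical initial data, so likewise $\beta_{F}\equiv\beta$. Specializing to $\phi=0$ produces $F=F_{\alpha,\beta}\in\Z^{\omega}_{b}$, giving the last assertion.

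There is no real obstacle once Theorems~\ref{thm:E0} and~\ref{thm:main2} and Proposition~\ref{prop:2ode} are in hand; the only point worth noticing is that the higher-order tail of the initial curve $\gamma$ plays no role, because the ODEs \eqref{eq:alpha-beta} already determine $\alpha_{F}$ and $\beta_{F}$ globally from the four quadratic and cubic Taylor coefficients of its two components. In particular, no subtle PDE or analyticity argument is needed beyond the Cauchy--Kovalevski step already invoked in Theorem~\ref{thm:E0}.
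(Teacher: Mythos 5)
Your proposal is correct and follows essentially the same route as the paper: choose a cubic initial curve encoding $\alpha(0),\alpha'(0),\beta(0),\beta'(0)$, invoke Theorem~\ref{thm:E0} to get $F\in\Y^\omega_\phi\cap\Y^\omega_b$ with that initial curve, and conclude $\alpha_F\equiv\alpha$, $\beta_F\equiv\beta$ — the paper simply asserts this last step, while you correctly supply the missing justification via Proposition~\ref{prop:2ode} and ODE uniqueness. Note that your initial curve $(0,1)+\tfrac{x^2}{2}\bigl(\alpha(0),\alpha'(0)\bigr)+\tfrac{x^3}{3}\bigl(\beta(0),\beta'(0)\bigr)$ is the correct one consistent with \eqref{zm22b}, whereas the paper's displayed $\gamma$ misgroups the coefficients (a typo).
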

\begin{proof}
 We set 
 \[
   \gamma(x)=(1,0)+\frac{x^2}2(\alpha(0),\beta(0))+
           \frac{x^3}3(\alpha'(0),\beta'(0)).
 \]
 By Theorem \ref{thm:E0},
 there exists a unique immersion 
 $F\in \Y^\omega_{\phi}\cap\Y^\omega_b $
 such that $\gamma_F=\gamma$.
 Then we have $\alpha_F=\alpha$ and $\beta_F=\beta$,
 proving the assertion.
\end{proof}
Finally, we prove the following assertion for the sign 
of the Gaussian curvature near a degenerate light-like point.
(cf.\ Proposition~\ref{prop:PK1} for the non-degenerate case.)
\begin{proposition}\label{prop:PK2}
 Let $F$ be an immersion in the class $\Y^\omega_b$.
 Then the Gaussian curvature function $K$ diverges 
 to $\infty$ at a degenerate light-like point if $\mu_F> 0$. 
 On the other hand, if $\mu_F=0$ and $\delta_F\ne 0$,
 then $K(x,y)$ diverges to $+\infty$ {\rm(}resp.\ $-\infty${\rm)}
 on the domain of $B_F(x,y)>0$ 
 {\rm(}resp.\ $B_F(x,y)<0${\rm)}
 as $(x,y)\to (0,0)$,
 where 
 \begin{equation}\label{eq:delta}
    \delta_F:= \beta'(0) + 3\alpha(0)\beta(0)
 \end{equation}
 and $\alpha=\alpha_F$, $\beta=\beta_F$.
\end{proposition}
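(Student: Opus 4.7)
The plan is to expand both $B_F$ and $C_F$ as Taylor series in $x$ with real-analytic coefficients in $y$, using the canonical representation \eqref{zm22b}, then simplify the leading terms via the ODEs of Proposition~\ref{prop:2ode}, and finally analyze $K=-C_F/B_F^2$ in each case. Throughout I write $\alpha=\alpha_F$ and $\beta=\beta_F$, and introduce the real-analytic function $\delta_F(y):=3\alpha(y)\beta(y)+\beta'(y)$, whose value at $y=0$ is the constant of \eqref{eq:delta}.

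Starting from \eqref{zm22b}, one records $f_x=\alpha x+\beta x^2+O(x^3)$, $f_y=1+\tfrac{\alpha'}{2}x^2+\tfrac{\beta'}{3}x^3+O(x^4)$, and the corresponding expansions of $f_{xx}$, $f_{yy}$, $f_{xy}$. Substituting into $B_F=1-f_x^2-f_y^2$ and using $\alpha'+\alpha^2=-\mu_F$ (from \eqref{eq:c}) collapses the $x^2$ coefficient to $\mu_F$ and yields
\[
B_F(x,y)=\mu_F\,x^2-\tfrac{2}{3}\delta_F(y)\,x^3+O(x^4).
\]
A parallel substitution into $C_F=f_{xx}f_{yy}-f_{xy}^2$ gives, after simplification with $\alpha''=-2\alpha\alpha'$, the $x^2$ coefficient $-\mu_F(\alpha^2+\mu_F)$; a further reduction using $\beta''=-4\alpha\beta'$ (from \eqref{eq:c2}) shows that, in the case $\mu_F=0$, the $x^3$ coefficient simplifies to $\tfrac{2}{3}\alpha(y)^2\delta_F(y)$.

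For $\mu_F>0$, the $x^2$ terms dominate, giving $B_F^2\sim\mu_F^2x^4$ and $-C_F\sim\mu_F(\alpha(y)^2+\mu_F)x^2$, hence
\[
K\sim\frac{\alpha(y)^2+\mu_F}{\mu_F\,x^2}\longrightarrow+\infty
\quad\text{as }(x,y)\to o\text{ with }x\neq 0,
\]
which proves the first assertion. For $\mu_F=0$ and $\delta_F\neq0$, both expansions begin at $x^3$ order with nonvanishing coefficients, and direct division gives $C_F/B_F=-\alpha(y)^2+O(x)$. Writing $K=-(C_F/B_F)/B_F$, the numerator approaches $\alpha(0)^2$ while $B_F\to0^+$ on $U_+$ and $B_F\to0^-$ on $U_-$, producing the claimed divergence to $+\infty$ and $-\infty$ respectively.

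I expect the main algebraic work to be the reduction of the $x^3$ coefficient of $C_F$ in the $\mu_F=0$ branch to the compact form $\tfrac{2}{3}\alpha^2\delta_F$: this requires interlocking use of both ODEs of Proposition~\ref{prop:2ode} and several non-trivial cancellations. A more delicate point is the sub-case $\alpha(0)=0$ within the $\mu_F=0$ branch, where uniqueness for the Riccati equation $\alpha'=-\alpha^2$ forces $\alpha\equiv 0$; then the $x^3$ coefficient of $C_F$ and the limiting numerator $\alpha(0)^2$ both vanish, and one must descend to the $x^4$ coefficient of $C_F$ (which comes out to $-\delta_F^2$) to finish the sign analysis.
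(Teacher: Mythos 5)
Your expansion-and-ODE strategy is exactly the paper's: the coefficients you record for $B_F$ and $C_F$, the reduction of the $x^2$ coefficient of $C_F$ to $\alpha'\mu_F=-\mu_F(\alpha^2+\mu_F)$ via \eqref{eq:c}, the collapse of the $x^3$ coefficient to $\tfrac23\alpha^2\delta_F$ via \eqref{eq:c2}, and the case split on $\mu_F$ all match the published argument. You go one step further than the paper by flagging the sub-case $\mu_F=0$, $\alpha(0)=0$, where the Riccati equation forces $\alpha\equiv 0$ and the leading coefficients of both $C_F$ and the quotient $C_F/B_F$ vanish; the paper's proof is silent there, and its final sentence (``the sign of $K(x,0)$ coincides with that of $B_F(x,0)$'') genuinely requires $\alpha(0)\neq 0$.

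The problem is that your proposed repair of that sub-case does not close the gap --- it shows the stated conclusion fails there. With $\alpha\equiv 0$ one has $\beta''=0$, hence $\beta'\equiv\delta_F$, and indeed $C_F=-\delta_F^2x^4+O(x^5)$ as you compute; but then $-C_F\sim\delta_F^2x^4>0$ while $B_F^2\sim\tfrac49\delta_F^2x^6$, so $K=-C_F/B_F^2\sim 9/(4x^2)$ diverges to $+\infty$ on \emph{both} sides of the light-like line, contradicting the asserted divergence to $-\infty$ on $\{B_F<0\}$. In other words the second assertion of the proposition, as you (and the paper) prove it, holds only when $\alpha(0)\neq 0$, i.e.\ for $F\in \Y^0_{\II}$; the excluded sub-case $F\in\Y^0_{I}$ with $\delta_F\neq 0$ is nonempty (it contains the type-changing surfaces of Example~\ref{ex:OJM}), and there the correct conclusion is two-sided divergence to $+\infty$. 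So you were right to isolate the sub-case, but ``finishing the sign analysis'' cannot yield the statement as written: you must either add the hypothesis $\alpha_F(0)\neq 0$ to the second assertion or record the different (both-sided $+\infty$) behavior for $\Y^0_{I}$.
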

\begin{proof}
 Recall that the Gaussian curvature $K$ is expressed as 
 (cf.\ \eqref{eq:BC}) 
 \[
   K=-\frac{C_F}{(B_F)^2}\qquad
   (C_F=f_{xx}f_{yy}-f_{xy}^2),
 \]
 where $f=\iota_F$ and $B_F$ is the function defined as 
 in \eqref{eq:B}.
 Since $F\in\Y^\omega_b$, can be expanded as \eqref{zm22b}.
 Then we have
 \begin{align*}
   C_F&= f_{xx}f_{yy}-(f_{xy})^2\\
    &=\left(\frac{1}{2} \alpha \alpha''-(\alpha')^2\right)x^2 
  +\left(\beta \alpha''-2 \alpha' \beta'+\frac{1}{3} \alpha \beta''\right)
  x^3+\mbox{(higher order terms)}.
 \end{align*}
 Since $F\in \Y^r_b$, the relations\eqref{eq:c} and \eqref{eq:c2}
 hold, and then we have
 \[
   C_F=\alpha'\mu_F x^2-\frac{2}{3} 
    \left(3 \alpha' \beta'+3 \alpha \beta \alpha'+2 \alpha^2 \beta'\right)
     x^3+\mbox{(higher order terms)}.
 \]
 If $\mu_F>0$, then $\alpha'(0)<0$ by \eqref{eq:c},
 so we get the conclusion.
 We next assume $\mu_F=0$, then
 $\alpha'=-\alpha^2$ and we have
 \[
   C_F(x,0)=
         \frac{2\alpha(0)^2\delta_F}{3} 
         x^3+\mbox{(higher order terms)}.
 \]
 In this situation, it holds that
 \[
      B_F(x,0)=-\frac{2\delta_F}{3}x^3+
          \mbox{(higher order terms)}.
 \]
 Thus the sign of the Gaussian curvature $K(x,0)$ 
 coincides with that of $B_F(x,0)$, proving the  
 assertion.
\end{proof}

\section{Examples}
In this section, we give several examples
of zero mean curvature surfaces:
We now give here a recipe to give more refined
approximate solutions as follows:
For $F\in\Z_b^\omega$, we can expand the function $f=\iota_F$ as
\begin{equation}\label{eq:0}
 f(x,y)=y+\sum_{k=2}^\infty \frac{a_{k}(y)}{k}x^{k}.
\end{equation}
We call  each function $a_{k}(y)$ as
the \emph{$k$-th approximation function} of $F$.
Remark that $a_2$ and $a_3$ coincide with 
$\alpha_F$ and $\beta_F$ in \eqref{zm22b}, respectively:
\begin{equation}\label{eq:alpha0}
  \alpha_F=a_2, \qquad \beta_F=a_3. 
\end{equation}

We give here several examples:
\begin{example}
 The light-like plane (cf.\ Example \ref{ex:A})
 $F(x,y)=(y,x,y)$
 belongs to $\Lambda^\omega\cap \Z^0_I$ such that
 $\gamma_F=(0,1)$
 and $\alpha_F=\beta_F=0$.
\end{example}

\begin{example}
 The light-cone (cf.\ Example \ref{ex:B})
 \[
    F(x,y)=(\sqrt{x^2+(1+y)^2}-1,x,y)
 \]
 also belongs to $\Lambda^\omega\cap \Z^0_{\II}$ such that
 \[
   \gamma_F=(\sqrt{1+x^2}-1,1/{\sqrt{1+x^2}})
 \]
 and 
 $\alpha_F=1/(1+y)$, $\beta_F=0$.
\end{example}

\begin{example}
 The surface
 $F(x,y)=(y+x^2/2,x,y)$
 is a zero-mean curvature surface 
 in $\Z^-_{\III}$,
 which satisfies
 $\gamma_F=(x^2/2,0)$
 and 
 \[
   \alpha_F=1,\qquad \beta_F=0.
 \]
\end{example}

\begin{example}
 Recall the space-like Scherk surface 
 $\{(t,x,y)\in \R^3_1\,;\,\cos t = \cos x\,cos y\}$
 (cf.\ \cite[Example 3]{CR}).
 We replace $(t,x,y)$ by $(t+(\pi/2),x,(\pi/2)-y)$,
 we have the expression
 \[
   F(x,y)=\left(-\arccos(\cos x \sin y)-\frac{\pi}{2},x,y\right)
 \]
 satisfying \eqref{eq:normalize}.
 This is a zero-mean curvature surface in $\Z^+$,
 which satisfies
 $\gamma_F=(-\pi,\cos x)$
 and 
 \[
   \alpha_F=-\tan y,\qquad \beta_F=0.
 \]
\end{example}

\begin{example}
 The time-like Scherk surface of the first kind 
 (cf.\ \cite[Example 4]{CR})
 can be normalized as 
 \[
   F(x,y)=\biggl(\arccosh(\cosh x \cosh(y+1))-1,x,y\biggr),
 \]
 which 
 is a zero-mean curvature surface in $\Z^-_{I}$,
 This satisfies
 \[
   \gamma_F=\left(-1+\arccosh( \cosh x \cosh 1),
   \frac{\sinh 1\cosh x }{\sqrt{(\cosh 1\cosh x)^2-1}}\right)
 \]
 and 
 \[
   \alpha_F=\coth y,\qquad \beta_F=0.
 \]
\end{example}

\begin{example}
 The time-like Scherk surface of the second kind 
 (cf.\ \cite[Example 5]{CR})
 \[
   F(x,y)=\biggl(\op{arcsinh}(\cosh x \sinh y),x,y\biggl)
 \]
 is a zero-mean curvature surface in $\Z^-_{\II}$,
 which satisfies
 $\gamma_F=(0,\cosh x)$
 and 
 \[
   \alpha_F=\tanh y,\qquad \beta_F=0
 \]
 hold.
\end{example}
\medskip
For $F\in\Z^\omega_b$,
it holds for $k\ge 4$ that
\begin{equation}\label{eq:a2k}
\left.\frac{d^kA}{dx^k}\right|_{x=0}=0\qquad (A:=A_F),
\end{equation}
which can be considered as an ordinary differential equation
of the $k$-th approximation function $a_k$ as in \eqref{eq:0}.
As shown in \cite{OJM}, 
\eqref{eq:a2k} is equivalent to
\begin{equation}
\label{eq:a22}
  a''_k+2(k-1)a_2a'_k+k(3-k)a'_2a_k +k(P_k+Q_k-R_k)=0,
\end{equation}
where $P_k$, $Q_k$, $R_k$ are terms written
using $\{a_s\}_{s<k}$ as follows:
\begin{align*}
 P_k&:=\sum_{m=3}^{k-1}\frac{2(k-2m+3)}{k-m+2} a_m a'_{k-m+2}, \\
 Q_k&:=
 \sum_{m=2}^{k-2}\sum_{n=2}^{k-m}
 \frac{3n-k+m-1}{mn}a'_m a'_{n}a_{k-m-n+2}, \\
 R_k&:=
 \sum_{m=2}^{k-2}\sum_{n=2}^{k-m}
 \frac{a_m a_{n}a''_{k-m-n+2}}{k-m-n+2}. 
\end{align*}
When $k=4$, \eqref{eq:a22}
reduces to
\[
   a''_4+6 a_2 a'_4
   -4 a'_2a_4 +
   3 a_2 (a'_2)^2
   -2 (a_2)^2 a''_2+
   \frac{8}{3} a_3 a'_3=0.
\]
Using \eqref{eq:a22} and \eqref{eq:0},
one can get an appropriate 
approximation  
for $F$. 

Finally, we remark on existence results
of zero mean curvature surface 
using Corollary \ref{cor:Z00}:
For $F\in \Z^\omega$, we set
\[
   \gamma_F=(0,1)+(0,v_1)x+
     \sum_{n=2}^4(u_n,v_n)\frac{x^{n}}{n}
      +\mbox{(higher order terms)}.
\]
Then $F\in \Z^\omega_b$ if and only if $v_1=0$.
Under the assumption $v_1=0$, 
the characteristic (cf.\ \eqref{eq:c}) $\mu_F$
and the constant $\delta_F$ in \eqref{eq:delta} 
satisfy
\[
 \mu_F=-(u_2^2+v_2)\qquad\text{and}\qquad \delta_F=3u_2u_3+v_3.
\]
We set
\[
 \Delta_F:=4u_3^2+8u_2u_4+v_2^2+2v_4. 
\]

\begin{proposition}
\label{prop:main}
 A surface $F\in \Z^\omega$
 belongs to $\Z^0_{I}$ {\rm(}resp.\ $\Z^0_{\II}$\rm{)}
 if $\mu_F=0$ and $u_2=0$ {\rm(}resp.\ $\mu_F=0$ and $u_2\ne 0${\rm)}. 
 Suppose that $\mu_F=0$. Then 
\begin{enumerate}
 \item 
      $F$ changes causal type if 
      $\delta_F\ne 0$, and
 \item
     $F$ has no time-like $($resp. space-like$)$ part if 
     $\delta_F=0$ and  
     $\Delta_F<0$ $($resp.~$\Delta_F>0)$.
\end{enumerate}
\end{proposition}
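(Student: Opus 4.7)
The $\Z^0_I$/$\Z^0_{\II}$ classification is immediate from Proposition~\ref{prop:2ode}: under $\mu_F=0$, $\alpha_F$ solves $\alpha'=-\alpha^2$ with $\alpha(0)=u_2$, so uniqueness of solutions forces $\alpha_F\equiv 0$ when $u_2=0$ (placing $F$ in $\Z^0_I$) and $\alpha_F(y)=1/(y+1/u_2)$ when $u_2\ne 0$ (placing $F$ in $\Z^0_{\II}$). For the remaining assertions the plan is to expand $B_F$ as a power series in $x$ with analytic coefficients in $y$ and extract successive factorizations. Writing $\alpha,\beta,\gamma$ for $\alpha_F,\beta_F,a_4$, the expansion \eqref{eq:0} reads
\[
 f(x,y)=y+\tfrac{\alpha(y)}{2}x^2+\tfrac{\beta(y)}{3}x^3+\tfrac{\gamma(y)}{4}x^4+O(x^5),
\]
and direct substitution into $B_F=1-f_x^2-f_y^2$, together with $\alpha'+\alpha^2=-\mu_F$ from \eqref{eq:c}, gives
\[
 B_F(x,y)=\mu_F\,x^2-\tfrac{2}{3}\delta_F(y)\,x^3+b_4(y)\,x^4+O(x^5),
\]
where $\delta_F(y):=\beta'(y)+3\alpha(y)\beta(y)$ and $b_4(y):=-\bigl(2\alpha\gamma+\beta^2+\tfrac{\gamma'}{2}+\tfrac{(\alpha')^2}{4}\bigr)$; substituting the $u_i,v_i$ values at $y=0$ gives $b_4(0)=-\Delta_F/4$.

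Now assume $\mu_F=0$. Then the $x^2$-coefficient of $B_F$ vanishes identically, so $B_F(x,y)=x^3H(x,y)$ for an analytic $H$ with $H(0,y)=-\tfrac{2}{3}\delta_F(y)$, giving $H(0,0)=-\tfrac{2}{3}\delta_F$. If $\delta_F\ne 0$, then $H$ has constant nonzero sign near $o$ and $B_F=x^3H$ takes opposite signs on $\{x>0\}$ and $\{x<0\}$ in a neighborhood of $o$; hence $F$ changes causal type across the light-like line $\{x=0\}$, proving~(1).

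For~(2) assume also $\delta_F=0$. The crucial step is to show that $\delta_F(y)$ vanishes identically. Differentiating the definition of $\delta_F(y)$ and using $\alpha'=-\alpha^2$ (from $\mu_F=0$) together with $\beta''=-4\alpha\beta'$ from \eqref{eq:c2} yields
\[
 \delta_F'(y)=\beta''+3\alpha'\beta+3\alpha\beta'=-\alpha\bigl(\beta'+3\alpha\beta\bigr)=-\alpha(y)\,\delta_F(y),
\]
so the linear ODE together with $\delta_F(0)=0$ forces $\delta_F(y)\equiv 0$. Hence the $x^3$-coefficient of $B_F$ vanishes for every $y$, and we may factor $B_F(x,y)=x^4\tilde G(x,y)$ with $\tilde G$ analytic and $\tilde G(0,0)=b_4(0)=-\Delta_F/4$. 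When $\Delta_F\ne 0$, $\tilde G$ has constant sign near $o$, so the sign of $B_F=x^4\tilde G$ on $\{x\ne 0\}$ equals that of $-\Delta_F$: no space-like part if $\Delta_F>0$, no time-like part if $\Delta_F<0$. The main obstacle is isolating the linear ODE $\delta_F'=-\alpha\delta_F$; without this step the vanishing of $\delta_F$ at a single point would not propagate to a whole $y$-interval, and the expansion of $B_F$ on $y=0$ alone would only control the sign along the $x$-axis rather than on a full two-dimensional neighborhood of $o$.
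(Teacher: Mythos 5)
Your proof is correct, and it follows the same basic strategy as the paper --- expanding $B_F=1-f_x^2-f_y^2$ in powers of $x$ using \eqref{eq:0} and reading off the signs of the coefficients $\mu_F$, $-\tfrac23\delta_F$, $-\tfrac14\Delta_F$ --- but you carry it out more carefully, and the extra care matters. The paper's proof computes only the restriction $B(x,0)$ along the $x$-axis and concludes; that literally suffices for assertion (1) (a sign change of $B$ already along $y=0$ shows both causal types occur), but assertion (2) is a statement about a full two-dimensional neighborhood of $o$, so the one-variable expansion alone does not close the argument. Your two additional ingredients supply exactly what is left implicit: the linear ODE $\delta_F'=-\alpha\delta_F$ (obtained from \eqref{eq:c} with $\mu_F=0$ and \eqref{eq:c2}), which propagates $\delta_F(0)=0$ to $\delta_F(y)\equiv 0$, and the resulting factorizations $B_F=x^3H$ and $B_F=x^4\tilde G$ with $H(0,0)=-\tfrac23\delta_F$, $\tilde G(0,0)=-\tfrac14\Delta_F$, which control the sign of $B_F$ on a genuine neighborhood. (The factorizations are legitimate: $B(0,y)=B_x(0,y)=0$ holds identically by \eqref{eq:b-zero}, $B_{xx}(0,y)=2\mu_F=0$, and $B_{xxx}(0,y)=-4\delta_F(y)=0$, so one may divide by $x^3$, resp.\ $x^4$, analytically.) Your identification $u_2=\alpha_F(0)$ and the uniqueness argument for $\alpha'=-\alpha^2$ correctly settle the $\Z^0_{I}$ versus $\Z^0_{\II}$ dichotomy, which the paper treats as immediate from the classification of solutions of \eqref{eq:c}. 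In short: same route, with a gap in the published proof explicitly filled.
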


\begin{proof}
 The causal type of $F$
 depends on the sign of $B:=B_F$. 
 As shown in \cite{CR}, 
 $B|_{x=0}=B_{x}|_{x=0}=0$. Moreover, one can easily see that
 \begin{equation}
  B(x,0)=\mu_F x^2-\frac{2\delta_F}{3}x^3-\frac{\Delta_F}{4}x^4
   +\mbox{(higher order terms)}.
 \end{equation}
 So we get the conclusion.
\end{proof}

\begin{example}\label{ex:OJM}
 In \cite{OJM}, $F\in \Z^\omega$ satisfying
 \[
   \gamma_F(x):=(0,1+3c x^3)
 \]
 is constructed,
 which  belongs to the class $\Z^\omega_{b}$
 and changes its causal type.
 Although  the existence of this $F$ is 
 obtained by applying Corollary \ref{cor:Z00}, 
 the advantage of the method in
 \cite{OJM} is that we can get the 
 explicit approximation for $F$ at the same time. 
\end{example}

Until now, the existence of zero mean curvature surfaces
(i.e. ZMC-surfaces)
in the following three cases 
was unknown (cf.\ the footnote of \cite[Page 194]{fluid});
\begingroup
\renewcommand{\theenumi}{{\rm(\roman{enumi})}}
\renewcommand{\labelenumi}{{\rm(\roman{enumi})}}
\begin{enumerate} 
 \item\label{enum:ZMC-1} ZMC-surfaces in $\Z^0_{I}$ without space-like part, 
 \item\label{enum:ZMC-2} ZMC-surfaces in $\Z^0_{I}$ without time-like part,
 \item\label{enum:ZMC-3} ZMC-surfaces in $\Z^0_{\II}$ which changes
      causal type. 
\end{enumerate}
We can show the existence of the above remaining cases:
\endgroup
\begin{corollary}
\label{cor:main}
 There exist ZMC-immersions
 satisfying \ref{enum:ZMC-1}, 
            \ref{enum:ZMC-2} and 
            \ref{enum:ZMC-3}, respectively.
\end{corollary}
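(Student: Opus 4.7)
The plan is, for each of the three cases, to prescribe an explicit real analytic initial curve $\gamma\in C^\omega_{(0,1)}(\R,\R^2)$, invoke Corollary~\ref{cor:Z00} to obtain the unique $F\in\Z^\omega$ with $\gamma_F=\gamma$, and then read off the required causal-type information from Proposition~\ref{prop:main}. Since all three targets demand $F\in\Z^\omega_b$ with $\mu_F=0$, the coefficients of $\gamma$ in the expansion used in Proposition~\ref{prop:main} must satisfy $v_1=0$ and $v_2=-u_2^2$; once this is arranged, the invariants $\delta_F=3u_2u_3+v_3$ and $\Delta_F=4u_3^2+8u_2u_4+v_2^2+2v_4$ control the causal behaviour.

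For \ref{enum:ZMC-1}, I would take $\gamma(x):=(x^3/3,\,1)$, which gives $u_2=0$, $u_3=1$ and $v_n=0$ for $n\le 4$; hence $\mu_F=0$ with $u_2=0$ (so $F\in \Z^0_I$), $\delta_F=0$, and $\Delta_F=4>0$, and Proposition~\ref{prop:main}(2) rules out space-like points. For \ref{enum:ZMC-2}, take $\gamma(x):=(0,\,1-x^4/4)$, so that all $u_n$ vanish and $v_4=-1$; again $F\in\Z^0_I$ with $\delta_F=0$, but now $\Delta_F=2v_4=-2<0$, so the time-like part is empty. For \ref{enum:ZMC-3}, take $\gamma(x):=(x^2/2,\,1-x^2/2+x^3/3)$; then $u_2=1$, $v_2=-1$ (so $\mu_F=0$), $u_3=0$ and $v_3=1$, whence $u_2\ne 0$ places $F$ in $\Z^0_{\II}$ and $\delta_F=v_3=1\ne 0$ triggers Proposition~\ref{prop:main}(1) to force a causal-type change.

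There is essentially no obstacle to executing this plan: existence comes for free from Corollary~\ref{cor:Z00}, and membership in $\Z^0_I$ or $\Z^0_{\II}$ together with the causal-type assertion reduces to a one-line algebraic check on $(\mu_F,\delta_F,\Delta_F)$. The only mild subtlety lies in \ref{enum:ZMC-3}, where the three constraints $u_2\ne 0$, $\mu_F=0$, and $\delta_F\ne 0$ must hold simultaneously; the choice above handles this by fixing $u_2=1$, using $v_2=-1$ to kill $\mu_F$, and choosing $v_3=1$ (with $u_3=0$) so that $\delta_F=v_3\ne 0$. One may of course perturb these initial curves by higher-order terms without affecting the conclusions, showing that each of the three classes is in fact infinite-dimensional.
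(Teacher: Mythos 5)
Your proposal is correct and follows essentially the same route as the paper: both arguments reduce the three cases to sign conditions on $(\mu_F,\delta_F,\Delta_F)$ and $u_2$ via Proposition~\ref{prop:main}, with existence supplied by Corollary~\ref{cor:Z00}. The only difference is cosmetic — the paper merely states the required coefficient conditions, while you exhibit explicit polynomial initial curves realizing them (and your three computations of $\mu_F$, $\delta_F$, $\Delta_F$ all check out).
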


\begin{proof}
 We set $\mu_F=0$.
 If $u_2\ne 0$ and $\delta_F\ne 0$, then 
 $F\in \Z^0_{\II}$ which changes causal type 
 (i.e.\ it
 gives the case \ref{enum:ZMC-3}).
 On the other hand,
 if $u_2=\delta_F=0$ and
 $\Delta_F<0$ (resp.~$\Delta_F>0$),
 then it gives the case \ref{enum:ZMC-2} (resp.\ \ref{enum:ZMC-1}).	
\end{proof}
\appendix
\section{Division Lemma}\label{app:div}
\begin{lemma}\label{lem:div}
 Let $g$ be a $C^r$-function {\rm(}$r\geq 1${\rm)} defined on a
 convex domain $U$ of the $xy$-plane including the origin $o$,
 satisfying
 \begin{equation}\label{eq:div-ass}
    g(0,y)=\frac{\partial g}{\partial x}(0,y)
          =\frac{\partial^2 g}{\partial x^2}(0,y)
          =\dots 
          =\frac{\partial^k g}{\partial x^k}(0,y)=0\qquad 
           \bigl((0,y)\in U\bigr)
 \end{equation}
 for a non-negative integer $k<r$.
 Then there exists a 
 $C^{r-k-1}$-function $h$ defined on $U$ such that
 \begin{equation}\label{eq:ass-div}
   g(x,y)=x^{k+1}h(x,y)\qquad \bigl((x,y)\in U\bigr).
 \end{equation}
\end{lemma}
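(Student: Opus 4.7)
The natural tool here is the integral form of Taylor's remainder in the $x$-variable, applied with $y$ held fixed. My plan is to write, for each $(x,y)\in U$ (using convexity so the segment from $(0,y)$ to $(x,y)$ stays in $U$),
\[
 g(x,y)=\sum_{j=0}^{k}\frac{x^{j}}{j!}\frac{\partial^{j}g}{\partial x^{j}}(0,y)
 +\frac{x^{k+1}}{k!}\int_{0}^{1}(1-t)^{k}\,
   \frac{\partial^{k+1}g}{\partial x^{k+1}}(tx,y)\,dt.
\]
This identity is legitimate because $g$ is of class $C^{r}$ with $r\ge k+1$, so the $(k+1)$-th partial derivative in $x$ exists and is continuous. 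By the vanishing assumption \eqref{eq:div-ass}, all terms in the finite sum are zero, and hence
\[
 g(x,y)=x^{k+1}h(x,y),\qquad
 h(x,y):=\frac{1}{k!}\int_{0}^{1}(1-t)^{k}\,
   \frac{\partial^{k+1}g}{\partial x^{k+1}}(tx,y)\,dt,
\]
which is exactly the desired representation \eqref{eq:ass-div}.

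The remaining point is regularity of $h$. The integrand
$(t,x,y)\mapsto (1-t)^{k}\,\partial^{k+1}g/\partial x^{k+1}(tx,y)$
is continuous on $[0,1]\times U$, so $h$ is continuous. More generally, since $\partial^{k+1}g/\partial x^{k+1}$ is of class $C^{r-k-1}$, each partial derivative of the integrand with respect to $x$ or $y$, up to total order $r-k-1$, is continuous on $[0,1]\times U$ and therefore locally uniformly bounded. Differentiation under the integral sign is then justified (for instance, by the standard dominated-convergence argument on compact subsets of $U$), and this shows $h\in C^{r-k-1}(U)$.

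I do not expect any serious obstacle. The only subtle point is justifying differentiation under the integral sign enough times to achieve the full $C^{r-k-1}$ regularity; this is however routine once one notes that the factor of $t$ brought down by each $\partial_{x}$ differentiation is bounded on $[0,1]$ and that all the derivatives of the integrand that appear are continuous on a compact product $[0,1]\times K$ for any compact $K\subset U$. Convexity of $U$ is used only to ensure the segment $\{(tx,y):t\in[0,1]\}$ lies in $U$, so that the Taylor expansion and the integral representation make sense pointwise on all of $U$.
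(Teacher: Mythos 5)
Your proof is correct, but it takes a genuinely different route from the paper's. You apply Taylor's theorem in the $x$-variable with the integral form of the remainder in a single step, which immediately yields the explicit kernel $h(x,y)=\frac{1}{k!}\int_0^1(1-t)^k\,\partial_x^{k+1}g(tx,y)\,dt$, and then you verify $h\in C^{r-k-1}$ by differentiating under the integral sign. The paper instead argues by induction on $k$: the base case $k=0$ is the elementary Hadamard-type identity $g(x,y)=x\int_0^1 g_x(tx,y)\,dt$, and the inductive step writes $g=x^k\phi$ with $\phi\in C^{r-k}$, observes that the hypothesis $\partial_x^k g(0,y)=0$ forces $\phi(0,y)=0$, and then applies the base case once more to $\phi$ to extract the final factor of $x$. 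What your approach buys is directness and a closed-form expression for $h$ obtained all at once, at the cost of invoking the order-$(k+1)$ Taylor remainder and a slightly more involved justification of repeated differentiation under the integral (which you handle adequately via continuity of the derivatives of the integrand on $[0,1]\times K$ for compact $K\subset U$). The paper's induction only ever uses the first-order identity, so each step is more elementary, but it loses the explicit formula and requires tracking the regularity loss one derivative at a time. Both arguments use convexity of $U$ in the same way, namely to keep the segment $\{(tx,y):t\in[0,1]\}$ inside $U$, and both land on the same $C^{r-k-1}$ regularity for $h$.
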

\begin{proof}
 We shall prove by an induction in $k$.
 Since
 \begin{align*}
   g(x,y)&=
          \int_0^1 \frac{d g(tx,y)}{d t}\,dt
         = \int_0^1 x g_x(tx,y) dt=x \int_0^1 g_x(tx,y)\, dt,
 \end{align*}
 the conclusion follows for $k=0$,
 by setting
 \[
        h(x,y):=\int_0^1 g_x(tx,y)\, dt.
 \]
 Assume that the statement holds for  $k-1$.
 If $g$ satisfies \eqref{eq:div-ass}, there exists
 a $C^{r-k}$-function $\phi(x,y)$ defined on $U$
 such that
 \begin{equation}\label{eq:vh}
  g(x,y)=x^k \phi(x,y)\qquad \bigl((x,y)\in U\bigr).
 \end{equation}
 Differentiating this $k$-times in $x$,
 we have
 \[
   0=\frac{\partial^k g}{\partial x^k}(0,y)
    = k! \phi(0,y)
 \]
 because of \eqref{eq:div-ass}.
 Hence, by the case $k=0$ of this lemma,
 there exists $C^{k-r-1}$-function $h(x,y)$ defined on $U$
 such that $\varphi(x,y)=x h(x,y)$.
 The function $h$ is the desired one.
\end{proof}

\begin{acknowledgements}
 The authors thank Shintaro Akamine, Udo Hertrich-Jeromin,
 Wayne Rossman and Seong-Deog Yang
 for valuable comments.
\end{acknowledgements}

\end{document}